\definecolor{darkblue}{rgb}{0.0,0.0,0.55}
\newtheorem{theorem}{Theorem}[section]
\newtheorem{lemma}[theorem]{Lemma}
\newtheorem{prop}[theorem]{Proposition}
\newtheorem{corr}[theorem]{Corollary}
\theoremstyle{definition}
\newtheorem{defn}[theorem]{Definition}
\newtheorem{rmk}[theorem]{Remark}
\DeclareMathOperator*{\argmin}{argmin}
\DeclareMathOperator{\tr}{tr}
\DeclareMathOperator{\Diag}{Diag}
\newcommand{\gm}{\#}
\newcommand{\half}{\tfrac12}
\newcommand{\reals}{\mathbb{R}}
\newcommand{\naturals}{\mathbb{N}}
\newcommand{\Var}{\text{Var}}
\newcommand{\kron}{\otimes}
\newcommand{\inv}[1]{{#1}^{-1}}
\newcommand{\nlsum}{\sum\nolimits}
\newcommand{\nlprod}{\prod\nolimits}
\newcommand{\invp}[1]{\left(#1\right)^{-1}}
\newcommand{\psd}{\mathbb S_m^+}
\newcommand{\pd}{\mathbb S_m^{++}}
\newcommand{\aopt}{\textsc{A}-optimal design\xspace}
\newcommand{\copt}{\textsc{C}-optimal design\xspace}
\newcommand{\dopt}{\textsc{D}-optimal design\xspace}
\newcommand{\spopt}{\textsc{ESP}-design\xspace}
\newcommand{\obj}[1]{E_\ell\Big(\invp{X_{#1}^\top X_{#1}}\Big)^{1/\ell}}
\numberwithin{equation}{section}
\title{Elementary Symmetric Polynomials for Optimal Experimental Design}
\author{\name Zelda Mariet \email{zelda@csail.mit.edu}\\
  \name Suvrit Sra \email{suvrit@mit.edu}\\
  \addr{Massachusetts Institute of Technology}
}
\begin{document}
\maketitle

\begin{abstract}
  We revisit the classical problem of optimal experimental design (OED) under a new mathematical model grounded in a geometric motivation. Specifically, we introduce models based on elementary symmetric polynomials; these polynomials capture ``partial volumes'' and offer a graded interpolation between the widely used \aopt and \dopt models, obtaining each of them as special cases. We analyze properties of our models, and derive both greedy and convex-relaxation algorithms for computing the associated designs. Our analysis establishes approximation guarantees on these algorithms, while our empirical results substantiate our claims and demonstrate a curious phenomenon concerning our greedy method. Finally, as a byproduct, we obtain new results on the theory of elementary symmetric polynomials that may be of independent interest.
\end{abstract}

\section{Introduction}
\vspace*{-5pt}
Optimal Experimental Design (OED) develops the theory of selecting experiments to perform in order to estimate a hidden parameter as well as possible. It operates under the assumption that experiments are costly and cannot be run as many times as necessary or run even once without tremendous difficulty~\citep{pukelsheim06}. OED has been applied in a large number of experimental settings~\citep{sagnol2010,cohn1994,SanchezLasheras20101169,7515015,citeulike:1728077}, and has close ties to related machine-learning problems such as outlier detection~\citep{Dolia04d,ENV:ENV628}, active learning~\citep{DBLP:journals/tip/He10,Gu2013}, Gaussian process driven sensor placement~\citep{krause2008near}, among others.

We revisit the classical setting where each experiment depends linearly on a hidden parameter $\theta \in \reals^m$. We assume there are $n$ possible experiments whose outcomes $y_i \in \reals$ can be written as
\begin{equation*}
  y_i = x_i^\top \theta + \epsilon_i \quad 1 \le i \le n,
\end{equation*}
where the $x_i\in \reals^m$ and $\epsilon_i$ are independent (zero mean) Gaussian noise vectors. OED seeks to answer the question: how to choose a set $S$ of $k$ experiments that allow us to estimate $\theta$ without bias and with minimal variance?

Given a feasible set $S$ of experiments (i.e., $\sum_{i \in S} x_ix_i^\top $ is invertible), the Gauss-Markov theorem  shows that the lowest variance for an unbiased estimate $\hat \theta$ satisfies $\Var[\hat \theta] = (\sum\nolimits_{i \in S} x_i x_i^\top)^{-1}$. However, $\Var[\hat\theta]$ is a matrix, and matrices do not admit a total order, making it difficult to compare different designs. Hence, OED is cast as an optimization problem that seeks \emph{an optimal design} $S^*$
\begin{equation}
  \label{eq:10}
  S^* \in \argmin_{S \in [n], |S|\le k} \Phi\Bigl(\bigl(\sum\nolimits_{i \in S} x_i x_i^\top\bigr)^{-1}\Bigr),
\end{equation}
where $\Phi$ maps positive definite matrices to $\reals$ to compare the variances for each design, and may help elicit different properties that a solution should satisfy, either statistical or structural.

\citet{elfving1952} derived some of the earliest theoretical results for the linear dependency setting, focusing on the case where one is interested in reconstructing a predefined linear combination of the underlying parameters $c^\top \theta$ (\copt). \citet{Kiefer1975} introduced a more general approach to OED, by considering matrix means  on positive definite matrices as a general way of evaluating optimality~\citep[Ch.~6]{pukelsheim06}, and~\citet{yu2010} derived general conditions for a map $\Phi$ under which a class of multiplicative algorithms for optimal design has guaranteed monotonic convergence.

Nonetheless, the theory of OED branches into multiple variants of~\eqref{eq:10} depending on the choice of $\Phi$, among which \aopt ($\Phi = $ trace) and \dopt ($\Phi = $ determinant) are probably the two most popular choices. Each of these choices has a wide range of applications as well as statistical, algorithmic, and other theoretical results. We refer the reader to the classic book~\citep{pukelsheim06}, which provides an excellent overview and  introduction to the topic; see also the summaries in~\citep{atkinson2007optimum,sagnol2010}.

For \aopt, recently~\citet{wang2016} derived greedy and convex-relaxation approaches; \citep{davenport2016} considers the problem of constrained adaptive sensing, where $\theta$ is supposed sparse. \dopt has historically been more popular, with several approaches to solving the related optimization problem~\citep{fedorov1972theory,silvey1978,miller1994,Horel2014}. The dual problem of D-optimality, Minimum Volume Covering Ellipsoid (MVCE) is also a well-known and deeply study optimization problem~\citep{Barnes1982,Rousseeuw1987,Vandenberghe1998,Sun2004,Dolia2006,todd2016}. 
Experimental design has also been studied in more complex settings:~\citep{Chaloner95bayesianexperimental} considers Bayesian optimal design; under certain conditions, non-linear settings can be approached with linear OED~\citep{CIS-220273,khuri2006}. 

Due to the popularity of A- and \dopt, the theory surrounding these two sub-problems has diverged significantly.  However, both the trace and the determinant are special cases of fundamental spectral polynomials of matrices: \emph{elementary symmetric polynomials (ESP)}, which have been extensively studied in matrix theory, combinatorics, information theory, and other areas due to their importance in the theory of polynomials~\citep{jozsa2015,macdonald1998symmetric,horn85,bhatia07,JAIN20111111,bauschke01}.

These considerations motivate us to derive a broader view of optimal design which we call \emph{ESP-Design}, where $\Phi$ is obtained from an elementary symmetric polynomial. This allows us to consider \aopt and \dopt as special cases of \spopt, and thus treat the entire ESP-class in a unified manner. Let us state the key contributions of this paper more precisely below.

\textbf{Contributions}
\begin{list}{\scriptsize$\bullet$}{\leftmargin=1.5em}
\vspace*{-5pt}
\setlength{\itemsep}{0pt}
\item We introduce \spopt, a new, general framework for OED that leverages geometric properties of positive definite matrices to interpolate between A- and D-optimality. \spopt offers an intuitive setting in which to gradually scale between A-optimal and D-optimal design.
\item We develop a convex relaxation as well as greedy algorithms to compute the associated designs. As a byproduct of our convex relaxation, we prove that ESPs are geodesically log-convex on the Riemannian manifold of positive definite matrices; this result may be of independent interest.
\item We extend a result of~\citet{avron2013} on determinantal column-subset selection to ESPs; as a consequence we obtain a greedy algorithm with provable optimality bounds for \spopt. 
\vspace*{-5pt}
\end{list}
Experiments on synthetic and real data illustrate the performance of our algorithms and confirm that \spopt can be used to obtain designs with properties of both A- and D-optimal designs. We show that our greedy algorithm generates designs of equal quality to the famous Fedorov exchange algorithm~\citep{fedorov1972theory}, while running in a fraction of the time. 
\vspace*{-7pt}
\section{Preliminaries}
\vspace*{-7pt}
We begin with some background material that also serves to set our notation. We omit proofs for brevity, as they can be found in standard sources such as~\citep{bhatia07}. 

We define $[n] \triangleq \{1,2,\ldots,n\}$. For $S \subseteq [n]$ and $M \in \reals^{n \times m}$, we write $M_S$ the $|S|\times m$ matrix created by keeping only the rows of $M$ indexed by $S$, and $M[S|S']$ the submatrix with rows indexed by $S$ and columns indexed by $S'$; by $x_{(i)}$ we denote the vector $x$ with its $i$-th component removed. 
For a vector $v \in \reals^m$, the \emph{elementary symmetric polynomial (ESP)} of order $\ell \in \naturals$ is defined by
\begin{equation}
  \label{eq:5}
  e_\ell(v_1, \ldots, v_m) \triangleq \nlsum_{1\le i_1 < \ldots < i_\ell \le m} \nlprod_{j=1}^\ell v_{i_j} = \nlsum_{I \subseteq [m], |I|=\ell} \nlprod_{j\in I} v_j,
\end{equation}
where $e_\ell\equiv0$ for $\ell=0$ and $\ell>m$. Let $\psd$ ($\pd$) be the cone of positive semidefinite (positive definite) matrices of order $m$. We denote by $\lambda(M)$ the eigenvalues (in decreasing order) of a symmetric matrix $M$. Def.~\eqref{eq:5} extends to matrices naturally; ESPs are \emph{spectral functions}, as we set $E_\ell(M) \triangleq e_\ell \circ \lambda(M)$; additionally, they enjoy another representation that allows us to interpret them as ``partial volumes'', namely,
\begin{equation}
  \label{eq:9}
  E_\ell(M) = \nlsum_{S\subseteq [n], |S|=\ell}\det(M[S|S]).
\end{equation}

The following proposition captures basic properties of ESPs that we will require in our analysis.
\begin{prop}
\label{prop:basic}
Let $M \in \reals^{m \times m}$ be symmetric and $1\le \ell \le m$; also let $A, B \in \psd$. We have the following properties: 
\begin{inparaenum}[(i)]
  \item If $A\succeq B$ in L\"owner order, then $E_\ell(A) \ge E_\ell(B)$;
  \item If $M$ is invertible, then $E_\ell(\inv M) = \det(M^{-1}) E_{m-\ell}(M)$; 
  \item $\nabla e_\ell(\lambda) = [e_{\ell-1}(\lambda_{(i)})]_{1\le i \le m}$.
\end{inparaenum}
\end{prop}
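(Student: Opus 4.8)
The plan is to exploit that every $E_\ell$ is a spectral function, so each of the three claims reduces to a statement about the eigenvalue vector $\lambda = \lambda(M)$ and the polynomial $e_\ell$ acting on it. I would prove the parts in the order (iii), (i), (ii), since the gradient formula in (iii) supplies exactly the monotonicity property needed for (i), while (ii) is independent.

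For (iii), I would differentiate the definition \eqref{eq:5} directly. Grouping the $\ell$-subsets $I$ according to whether they contain the index $i$, only the terms with $i \in I$ depend on $\lambda_i$, and each such term equals $\lambda_i \nlprod_{j \in I \setminus \{i\}} \lambda_j$. Differentiating in $\lambda_i$ and letting $I$ range over all $\ell$-subsets containing $i$ is the same as letting $J = I \setminus \{i\}$ range over all $(\ell-1)$-subsets of $[m] \setminus \{i\}$; the resulting sum is by definition $e_{\ell-1}(\lambda_{(i)})$. This is a routine bookkeeping argument with no real obstruction.

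For (i), Weyl's monotonicity theorem gives $\lambda_i(A) \ge \lambda_i(B) \ge 0$ for every $i$ (eigenvalues sorted in decreasing order) whenever $A \succeq B$ with $A,B \in \psd$. From (iii), $\partial e_\ell / \partial \lambda_i = e_{\ell-1}(\lambda_{(i)}) \ge 0$ on the nonnegative orthant, so $e_\ell$ is coordinatewise nondecreasing there. Raising the entries of $\lambda(B)$ to those of $\lambda(A)$ one coordinate at a time therefore cannot decrease the value, giving $E_\ell(A) = e_\ell(\lambda(A)) \ge e_\ell(\lambda(B)) = E_\ell(B)$. The only genuine external input here is Weyl's theorem; I would flag this as the main (though entirely standard) obstacle, since everything else is elementary. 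An alternative route via the partial-volume representation \eqref{eq:9}, using that principal submatrices preserve the L\"owner order and that $\det$ is monotone on $\psd$, also works but ultimately relies on the same eigenvalue reasoning.

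For (ii), I would again pass to eigenvalues: if $M$ is invertible with eigenvalues $\lambda_1, \dots, \lambda_m \neq 0$, then $M^{-1}$ has eigenvalues $\lambda_j^{-1}$, so $E_\ell(M^{-1}) = \nlsum_{|I|=\ell} \nlprod_{j \in I} \lambda_j^{-1}$. Multiplying each summand by $\det(M)/\det(M) = \nlprod_j \lambda_j / \nlprod_j \lambda_j$ converts $\nlprod_{j \in I} \lambda_j^{-1}$ into $\det(M)^{-1}\nlprod_{j \notin I} \lambda_j$, and summing over the complementation bijection $I \leftrightarrow [m]\setminus I$ between $\ell$-subsets and $(m-\ell)$-subsets yields $\det(M^{-1})\, e_{m-\ell}(\lambda) = \det(M^{-1})\, E_{m-\ell}(M)$. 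This is a direct identity once the complementation bijection is set up, and I expect no difficulty beyond careful indexing.
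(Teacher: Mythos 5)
Your three arguments are all correct: the term-grouping derivation of (iii), Weyl monotonicity plus coordinatewise monotonicity of $e_\ell$ on the nonnegative orthant for (i), and the complementation bijection for (ii) are exactly the standard proofs. The paper itself omits the proof of this proposition and defers to standard references such as Bhatia, so there is no in-paper argument to compare against; your write-up fills that gap with the expected reasoning and no errors.
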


\vspace*{-6pt}
\section{\spopt}
\vspace*{-6pt}
\aopt uses $\Phi\equiv \tr$ in (\ref{eq:10}), and thus selects designs with low average variance. Geometrically, this translates into selecting confidence ellipsoids whose bounding boxes have a small diameter. Conversely, \dopt uses $\Phi\equiv \det$ in~\eqref{eq:10}, and selects vectors that correspond to the ellipsoid with the smallest volume; as a result it is more sensitive to outliers in the data\footnote{For a more in depth discussion of the geometric interpretation of various optimal designs, refer to e.g.~\citep[Section 7.5]{boyd2004}.}. We introduce a natural model that scales between A- and \dopt. Indeed, by recalling that both the trace and the determinant are special cases of ESPs, we obtain a new model as fundamental as A- and \dopt, while being able to interpolate between the two in a graded manner. 

Unless otherwise indicated, we consider that we are selecting experiments \emph{without} repetition. 

\vspace*{-6pt}
\subsection{Problem formulation}
\vspace*{-4pt}

Let $X \in \reals^{n \times m}$ ($m \ll n$) be a design matrix with full column rank, and $k \in \naturals$ be the budget ($m \le k \le n$). Define $\Gamma_k = \{S \subseteq [n]~\text{s.t.}~|S|\le k, X_S^\top X_S \succ 0\}$ to be the set of \emph{feasible} designs that allow unbiased $\theta$ estimates.
 For $\ell \in \{1, \ldots, m\}$, we introduce the \emph{ESP-design} model:
\begin{equation}
 \label{eq:1}
  \min_{\substack{S \in \Gamma_k}}\quad f_\ell(S) \triangleq \tfrac 1 \ell \log E_\ell\bigl((X_S^\top X_S)^{-1}\bigr).
\end{equation}
We keep the $1/\ell$-factor in~\eqref{eq:1} to highlight the homogeneity ($E_\ell$ is a polynomial of degree $\ell$) of our design criterion, as is advocated in~\citep[Ch.~6]{pukelsheim06}.

For $\ell=1$, \eqref{eq:1} yields \aopt, while for $\ell=m$, it yields \dopt. For $1<\ell<m$, \spopt interpolates between these two extremes. Geometrically, we may view it as seeking an ellipsoid with the smallest average volume for $\ell$-dimensional slices (taken across sets of size $\ell$). Alternatively, \spopt can be also be interpreted as a regularized version of \dopt via Prop.~\ref{prop:basic}-{\it(ii)}. In particular, for $\ell = m-1$, we recover a form of regularized \dopt:
\[f_{m-1}(S) = \tfrac{1}{m-1} \bigl[\log \det \bigl((X_S^\top X_S)^{-1}\bigr) + \log \|X_S\|^2_2\bigr].\]

Clearly, \eqref{eq:1} is a hard combinatorial optimization problem, which precludes an exact solution. However, its objective enjoys remarkable properties that help us derive efficient algorithms for its approximate solution. The first one of these is based on a natural convex relaxation obtained below.

\vspace*{-4pt}
\subsection{Continuous relaxation}
\label{sec:conv}
\vspace*{-4pt}
We describe below a traditional approach of relaxing \eqref{eq:1} by relaxing the constraint on $S$, allowing elements in the set to have fractional multiplicities. The new optimization problem takes the form
\begin{equation}
  \label{eq:20}
    \min_{z \in \Gamma_k^c}  \tfrac 1 \ell \log E_\ell\Big((X^\top \Diag(z)X)^{-1}\Big),
\end{equation}
where we $\Gamma_k^c$ denotes the set of  vectors $\{z \in \reals^n \mid 0 \le z_i \le 1\}$ such that $X^\top \Diag(z)X$ remains invertible and $\bm 1^\top z \le k$. The following is a direct consequence of Prop~\ref{prop:basic}-{\it (i)}:
\begin{prop}
  \label{prop:saturated}
  Let $z^*$ be the optimal solution to \eqref{eq:20}. Then $\|z^*\|_1=k$.
\end{prop}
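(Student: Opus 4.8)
The plan is to argue by contradiction, exploiting the fact that the objective in \eqref{eq:20} is monotone with respect to the L\"owner order on the information matrix $X^\top \Diag(z) X$. Since every feasible $z$ is nonnegative we have $\|z\|_1 = \bm 1^\top z$, so suppose for contradiction that the optimum satisfies $\bm 1^\top z^* < k$. Because $k \le n = \bm 1^\top \bm 1$, the budget can fail to be exhausted only if some coordinate is below its upper bound; concretely there exists an index $i$ with $z_i^* < 1$, and after discarding any useless all-zero rows of $X$ we may take $x_i \ne 0$. I would then form the perturbed point $z' = z^* + \delta e_i$ with $\delta = \min\bigl(1 - z_i^*,\, k - \bm 1^\top z^*\bigr) > 0$; by construction $0 \le z' \le 1$ and $\bm 1^\top z' \le k$, and since $X^\top \Diag(z')X = X^\top \Diag(z^*)X + \delta\, x_i x_i^\top \succeq X^\top \Diag(z^*)X \succ 0$ remains invertible, we have $z' \in \Gamma_k^c$.

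Writing $M = X^\top \Diag(z^*) X$ and $M' = M + \delta x_i x_i^\top$, the next step is to compare the two inverses via the Sherman--Morrison identity, which gives
\[
M^{-1} - M'^{-1} = \frac{\delta}{1 + \delta\, x_i^\top M^{-1} x_i}\,(M^{-1}x_i)(M^{-1}x_i)^\top,
\]
a nonzero positive semidefinite rank-one matrix since $M^{-1}x_i \ne 0$. Hence $M'^{-1} \preceq M^{-1}$, and Prop.~\ref{prop:basic}-(i) immediately yields $E_\ell(M'^{-1}) \le E_\ell(M^{-1})$; that is, spending the leftover budget does not increase $f_\ell$.

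The crux is upgrading this to a \emph{strict} inequality, which is what actually forces saturation. For this I would invoke the gradient formula of Prop.~\ref{prop:basic}-(iii): at any $P \succ 0$ with eigendecomposition $P = U \Diag(\lambda) U^\top$, the spectral gradient of $E_\ell$ is $\nabla E_\ell(P) = U \Diag\bigl(e_{\ell-1}(\lambda_{(1)}), \ldots, e_{\ell-1}(\lambda_{(m)})\bigr) U^\top$, and since $e_{\ell-1}$ of strictly positive arguments is strictly positive, $\nabla E_\ell(P) \succ 0$. Setting $P(t) = (1-t)M'^{-1} + t M^{-1}$ for $t \in [0,1]$ — a path of positive definite matrices — the chain rule gives $g'(t) = \bigl\langle \nabla E_\ell(P(t)),\, M^{-1} - M'^{-1}\bigr\rangle > 0$, being the trace of a positive definite matrix against a nonzero positive semidefinite one. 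Therefore $E_\ell(M^{-1}) = g(1) > g(0) = E_\ell(M'^{-1})$, so $f_\ell(z') < f_\ell(z^*)$, contradicting optimality and proving $\|z^*\|_1 = k$.

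The main obstacle I anticipate is exactly this strictness argument: Prop.~\ref{prop:basic}-(i) by itself only delivers ``$\ge$'', so one must separately certify that $E_\ell$ is strictly increasing along a nonzero positive semidefinite perturbation, which is precisely where the positive-definiteness of the ESP gradient does the work. A secondary technicality is the degenerate case in which every slack coordinate corresponds to a zero row $x_i = 0$; such rows never affect the information matrix and can be removed from $X$ at the outset, so this case is vacuous.
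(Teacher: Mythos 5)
Your argument is correct and rests on the same core idea as the paper's, which dismisses this proposition in a single line as ``a direct consequence of Prop.~\ref{prop:basic}-(i)'', i.e., of the monotonicity of $E_\ell$ in the L\"owner order when mass is added to the information matrix. Where you go beyond the paper is in observing that Prop.~\ref{prop:basic}-(i) alone only yields the non-strict inequality $E_\ell(M'^{-1}) \le E_\ell(M^{-1})$, which shows that \emph{some} optimum saturates the budget but not that \emph{every} optimum does, as the statement ``$\|z^*\|_1 = k$'' requires. Your Sherman--Morrison computation, showing $M^{-1} - M'^{-1}$ is a nonzero PSD rank-one matrix, combined with the positive definiteness of $\nabla E_\ell(P)$ for $P \succ 0$ (via Prop.~\ref{prop:basic}-(iii)), correctly upgrades this to a strict decrease of the objective and closes that gap. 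Two minor caveats, neither fatal: the degenerate case where the only slack coordinates correspond to zero rows of $X$, which you flag and dispose of (though strictly one should also note that after discarding such rows enough indices must remain for the contradiction to bite); and the fact that your strictness step needs the standard convention $e_0 \equiv 1$ rather than the paper's stated $e_0 \equiv 0$ --- the same convention is already implicitly required for Prop.~\ref{prop:basic}-(iii) to hold at $\ell = 1$, so this is a defect of the paper's notation rather than of your proof.
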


Convexity of $f_\ell$ on $\Gamma_k^c$ (where by abuse of notation, $f_\ell$ also denotes the continuous relaxation in~\eqref{eq:20}) can be obtained as a consequence of~\citep{muir1974}; however, we obtain it as a corollary Lemma~\ref{lem:ek-gcvx}, which shows that $\log E_\ell$ is geodesically convex; this result seems to be new, and is \emph{stronger} than convexity of $f_\ell$; hence it may be of independent interest.

\begin{defn}[geodesic-convexity]
  A function $f: \pd \to \reals$ defined on the Riemannian manifold $\pd$ is called \emph{geodesically convex} if it satisfies
  \begin{equation*}
    f(P\gm_t Q)  \le (1-t)f(P) + t f(Q),\qquad t \in [0,1], \ \text{and } P, Q \succ 0.
  \end{equation*}
  where we use the traditional notation $P \gm_t Q := P^{1/2}(P^{-1/2}QP^{-1/2})^tP^{1/2}$ 
  to denote the \emph{geodesic} between $P$ and $Q \in \pd$ under the Riemannian metric $g_P(X,Y)=\tr(P^{-1}XP^{-1}Y)$.
\end{defn}
\begin{lemma}
  \label{lem:ek-gcvx}
  The function $E_\ell$ is geodesically log-convex on the set of positive definite matrices.
\end{lemma}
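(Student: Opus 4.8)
The plan is to reduce the statement to a one-dimensional (trace) inequality by passing to exterior powers. Write $\Lambda^\ell(M)$ for the $\ell$-th exterior (compound) power of $M$, the $\binom{m}{\ell}\times\binom{m}{\ell}$ matrix whose eigenvalues are the products $\prod_{j\in I}\lambda_j(M)$ over $I\subseteq[m]$ with $|I|=\ell$. Since $e_\ell$ evaluates exactly these products and sums them, we have the representation $E_\ell(M)=\tr\Lambda^\ell(M)$. The desired bound, namely $\log E_\ell(P\gm_t Q)\le(1-t)\log E_\ell(P)+t\log E_\ell(Q)$, will follow once we establish two facts: (a) $\Lambda^\ell$ sends the geodesic between $P$ and $Q$ to the geodesic between $\Lambda^\ell(P)$ and $\Lambda^\ell(Q)$; and (b) the trace of a weighted geometric mean is log-convex in its endpoints.

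For (a) I would use three standard functorial properties of the compound power on $\pd$: multiplicativity $\Lambda^\ell(AB)=\Lambda^\ell(A)\Lambda^\ell(B)$ (Cauchy--Binet), compatibility with the functional calculus $\Lambda^\ell(A^s)=(\Lambda^\ell A)^s$ for $A\succ 0$ and $s\in\reals$ (both sides share an eigenbasis, with eigenvalues raised to the $s$-th power), and preservation of positive definiteness. Substituting $P\gm_t Q=P^{1/2}(P^{-1/2}QP^{-1/2})^t P^{1/2}$ and pushing $\Lambda^\ell$ through each factor gives
\[
\Lambda^\ell(P\gm_t Q)=(\Lambda^\ell P)^{1/2}\bigl[(\Lambda^\ell P)^{-1/2}(\Lambda^\ell Q)(\Lambda^\ell P)^{-1/2}\bigr]^t(\Lambda^\ell P)^{1/2}=(\Lambda^\ell P)\gm_t(\Lambda^\ell Q).
\]
Taking traces, $E_\ell(P\gm_t Q)=\tr\bigl[(\Lambda^\ell P)\gm_t(\Lambda^\ell Q)\bigr]$, so everything reduces to claim (b) applied to $A=\Lambda^\ell P$ and $B=\Lambda^\ell Q$.

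For (b) I claim $\tr(A\gm_t B)\le(\tr A)^{1-t}(\tr B)^t$ for all $A,B\succ 0$. Both sides are homogeneous of the same degrees in $A$ and $B$ (since $(\alpha A)\gm_t(\beta B)=\alpha^{1-t}\beta^t\,(A\gm_t B)$), so after rescaling I may assume $\tr A=\tr B=1$, whereupon it suffices to prove $\tr(A\gm_t B)\le1$. Here I invoke the operator arithmetic--geometric mean inequality $A\gm_t B\preceq(1-t)A+tB$, which itself reduces, via the congruence $X\mapsto A^{-1/2}XA^{-1/2}$, to the scalar Bernoulli inequality $\lambda^t\le(1-t)+t\lambda$ for $\lambda>0$ and $t\in[0,1]$. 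Taking traces then gives $\tr(A\gm_t B)\le(1-t)\tr A+t\tr B=1$. Undoing the normalization yields the log-convexity bound, and hence the lemma.

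I expect the only genuinely delicate step to be (a): one must verify carefully that $\Lambda^\ell$ commutes with square roots, inverses, and real powers of positive definite matrices — that is, that it is compatible with the full functional calculus and not merely with polynomial expressions — so that a geodesic is mapped to a geodesic rather than to some nearby curve. Once this intertwining is secured, the reduction to the scalar-level inequality in (b) is routine; as a sanity check it recovers the extreme cases, since $\ell=m$ gives $E_m=\det$, for which the bound holds with equality (geodesic log-linearity of the determinant), while $\ell=1$ gives $E_1=\tr$ and the plain inequality $\tr(P\gm_t Q)\le(\tr P)^{1-t}(\tr Q)^t$.
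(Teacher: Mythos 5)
Your proof is correct, and it follows the same overall strategy as the paper --- pass to the $\ell$-th antisymmetric power, use that this operation intertwines with the geodesic, and reduce to a trace inequality --- but it differs in both of the two key ingredients, in ways that make it more self-contained. First, the paper realizes $E_\ell(P)=\tr\wedge^\ell P=\tr W^*P^{\otimes \ell}W$ as a \emph{compression} of the full tensor power and invokes the identity $(P\gm_t Q)^{\otimes \ell}=P^{\otimes \ell}\gm_t Q^{\otimes \ell}$ from the literature, whereas you work directly with the compound matrix $\Lambda^\ell$ and derive $\Lambda^\ell(P\gm_t Q)=\Lambda^\ell(P)\gm_t\Lambda^\ell(Q)$ from Cauchy--Binet multiplicativity plus compatibility with the functional calculus (your ``delicate step'' (a) is indeed fine: for $A\succ0$ both $\Lambda^\ell(A^s)$ and $(\Lambda^\ell A)^s$ are diagonalized by $\Lambda^\ell(U)$, which is unitary, with matching eigenvalues $\prod_{j\in I}\lambda_j^s$). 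Second, and more substantively, the paper's final inequality is the geodesic log-convexity of the compression map $P\mapsto\tr(W^*PW)$, which it cites as a nontrivial result of Sra--Hosseini; because you land on a plain trace rather than a compression, you only need $\tr(A\gm_t B)\le(\tr A)^{1-t}(\tr B)^t$, which you prove from scratch via the joint homogeneity of $\gm_t$ and the operator arithmetic--geometric mean inequality. The paper's route buys generality (the compression lemma covers $\tr(W^*\cdot W)$ for arbitrary isometries, not just the identity) and lets it handle only the midpoint case before appealing to continuity; yours buys an elementary, citation-free argument that handles all $t\in[0,1]$ directly. Both are valid proofs of the lemma.
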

\begin{corr}
  \label{cor:ek}
  The map $M \mapsto E_\ell^{1/\ell}((X^\top M X)^{-1})$ is log-convex on the set of PD matrices.
\end{corr}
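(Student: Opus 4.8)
The plan is to deduce the ordinary (Euclidean) convexity claimed here from the \emph{geodesic} convexity supplied by Lemma~\ref{lem:ek-gcvx}, using that the weighted geometric mean sits below the weighted arithmetic mean in the L\"owner order. Since $\log E_\ell^{1/\ell}(\cdot) = \tfrac1\ell \log E_\ell(\cdot)$ and $\ell>0$, it suffices to show that $M \mapsto \log E_\ell\bigl((X^\top M X)^{-1}\bigr)$ is convex on $\pd$. Fix $M_0, M_1 \succ 0$ and $t \in [0,1]$, write $M_t = (1-t)M_0 + t M_1$, and set $P := X^\top M_0 X$ and $Q := X^\top M_1 X$, both of which are positive definite because $X$ has full column rank. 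As $M \mapsto X^\top M X$ is linear, $X^\top M_t X = (1-t)P + tQ$, so the target reduces to $\log E_\ell\bigl(((1-t)P+tQ)^{-1}\bigr) \le (1-t)\log E_\ell(P^{-1}) + t \log E_\ell(Q^{-1})$.

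First I would invoke the operator arithmetic--geometric mean inequality $P \gm_t Q \preceq (1-t)P + tQ$, which follows from $A^t \preceq (1-t)I + tA$ for $A \succ 0$ and $t \in [0,1]$ (the scalar Bernoulli inequality applied eigenvalue-wise, since $A^t$ and $(1-t)I + tA$ are simultaneously diagonalizable) after the congruence $A = P^{-1/2}QP^{-1/2}$. Because inversion reverses the L\"owner order, $\bigl((1-t)P+tQ\bigr)^{-1} \preceq (P\gm_t Q)^{-1}$, and by the standard identity $(P \gm_t Q)^{-1} = P^{-1}\gm_t Q^{-1}$ the right-hand side is itself a geodesic midpoint, namely $P^{-1}\gm_t Q^{-1}$. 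Monotonicity of $E_\ell$ (Prop.~\ref{prop:basic}-{\it(i)}) then yields $E_\ell\bigl(((1-t)P+tQ)^{-1}\bigr) \le E_\ell\bigl(P^{-1}\gm_t Q^{-1}\bigr)$; taking logarithms and applying the geodesic log-convexity of $E_\ell$ (Lemma~\ref{lem:ek-gcvx}) to the geodesic joining $P^{-1}$ and $Q^{-1}$ gives $\log E_\ell\bigl(P^{-1}\gm_t Q^{-1}\bigr) \le (1-t)\log E_\ell(P^{-1}) + t\log E_\ell(Q^{-1})$. Chaining these bounds produces exactly the desired convexity inequality, and dividing by $\ell$ then exponentiating recovers the stated log-convexity.

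The conceptual crux is this passage from geodesic to linear convexity. Lemma~\ref{lem:ek-gcvx} controls $E_\ell$ only along the matrix geometric mean $P \gm_t Q$, whereas the relaxation~\eqref{eq:20} requires convexity along the straight segment $(1-t)P + tQ$; these two curves differ, so neither convexity statement formally implies the other on its own. The three ingredients that let the argument bridge this gap are (a) the L\"owner domination $P\gm_t Q \preceq (1-t)P+tQ$, (b) the fact that inversion is order-reversing and commutes with the geometric mean, and (c) the monotonicity of $E_\ell$, which preserves the sandwich after composition. I expect the operator AM--GM step and the inversion identity to be the only places demanding care; once they are established, the remaining links follow immediately from the monotonicity property and the geodesic log-convexity lemma already in hand.
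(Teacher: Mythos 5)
Your argument is correct and follows essentially the same route as the paper's own proof: the operator AM--GM inequality $P\gm_t Q \preceq (1-t)P + tQ$, order reversal under inversion together with $(P\gm_t Q)^{-1} = P^{-1}\gm_t Q^{-1}$, monotonicity of $E_\ell$ in the L\"owner order, and finally the geodesic log-convexity of Lemma~\ref{lem:ek-gcvx}. The only cosmetic difference is that you carry a general $t\in[0,1]$ throughout, whereas the paper proves the midpoint case and appeals to continuity.
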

For further details on the theory of geodesically convex functions on $\psd$ and their optimization, we refer the reader to~\citep{sraHo15}. We prove Lemma~\ref{lem:ek-gcvx} and Corollary~\ref{cor:ek} in Appendix~\ref{app:gcvx}. 

From Corollary~\ref{cor:ek}, we immediately obtain that \eqref{eq:20} is a convex optimization problem, and can therefore be solved using a variety of efficient algorithms. Projected gradient descent turns out to be particularly easy to apply because we only require projection onto the intersection of the cube $0\le z\le 1$ and the plane $\{z \mid z^\top \bm{1} = k\}$ (as a consequence of Prop~\ref{prop:saturated}). Projection onto this intersection is a special case of the so-called continuous quadratic knapsack problem, which is a very well-studied problem and can be solved essentially in linear time~\citep{Cominetti2014,Davis2016}. 
\begin{rmk}
  The convex relaxation remains log-convex when points can be chosen with multiplicity, in which case the projection step is also significantly
  simpler, requiring only $z \ge 0$.
\end{rmk}
We conclude the analysis of the continuous relaxation by showing a bound on the support of its solution under some mild assumptions:
\begin{theorem}
  \label{thm:support}
  Let $\phi$ be the mapping from $\reals^m$ to $\reals^{m(m+1)/2}$ such that $\phi(x) = (\xi_{ij}x_ix_j))_{1 \le i,j \le m}$ with $\xi_{ij}=1$ if $i=j$ and 2 otherwise. Let $\tilde \phi(x) = (\phi(x),1)$ be the affine version of $\phi$. If for any set of $m(m+1)/2$ distinct rows of $X$, the mapping under $\tilde \phi$ is independent, then the support of the optimum $z^*$of \eqref{eq:20} satisfies $\|z^*\|_0 \le k + \tfrac{m(m+1)}{2}$.
\end{theorem}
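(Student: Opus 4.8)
The plan is to exploit that the objective of~\eqref{eq:20} depends on $z$ only through the information matrix $X^\top\Diag(z)X=\sum_{i=1}^n z_i\,x_ix_i^\top$, which is a \emph{linear} image of $z$. Reading $\phi(x_i)$ as the vectorization of $x_ix_i^\top$ in the $d:=\tfrac{m(m+1)}2$ coordinates that define $\phi$, the matrix $\Phi=[\phi(x_1),\dots,\phi(x_n)]\in\reals^{d\times n}$ is such that $\Phi z$ is the vectorization of $X^\top\Diag(z)X$; hence $f_\ell$ factors as $f_\ell(z)=g(\Phi z)$ for a convex $g$ (convexity being provided by Corollary~\ref{cor:ek}). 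Let $\tilde\Phi\in\reals^{(d+1)\times n}$ be $\Phi$ with the row $\bm 1^\top$ appended, so that the columns of $\tilde\Phi$ are exactly the affine lifts $\tilde\phi(x_i)$. Fix an optimal $z^*$; by Proposition~\ref{prop:saturated} it obeys $\bm 1^\top z^*=k$, so with $w^*:=(\Phi z^*,k)$ the polytope
\[
  \mathcal F=\{\,z\in\reals^n : \tilde\Phi z=w^*,\ 0\le z\le 1\,\}
\]
consists \emph{entirely} of optimal solutions: every $z\in\mathcal F$ produces the same information matrix, hence the same (optimal) objective value, and is feasible because $X^\top\Diag(z)X=X^\top\Diag(z^*)X\succ 0$ and $\bm 1^\top z=k$.

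Next I would pass to an extreme point of $\mathcal F$. Since $\mathcal F$ is nonempty, convex and compact it has one, which I take (without loss of generality, as all points of $\mathcal F$ are optimal) to be $z^*$. Let $F=\{i:0<z^*_i<1\}$ be its fractional support. I claim the columns $\{\tilde\phi(x_i)\}_{i\in F}$ are linearly independent: otherwise there is $\delta\neq 0$ supported on $F$ with $\tilde\Phi\delta=0$, and then $z^*\pm\varepsilon\delta\in\mathcal F$ for small $\varepsilon>0$ — such a move preserves the information matrix ($\Phi\delta=0$), the budget ($\bm 1^\top\delta=0$), and the box constraints (as $\delta$ is supported on strictly interior coordinates) — exhibiting $z^*$ as a midpoint of two distinct points of $\mathcal F$ and contradicting extremality. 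This is the step that invokes the general-position hypothesis: it guarantees that the $\tilde\phi(x_i)$ are in general position (any $d$ of them independent), so that the fractional coordinates genuinely correspond to a linearly independent family; since that family lies in $\reals^{d+1}$, we conclude $|F|\le d+1$.

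Finally I would convert the bound on $|F|$ into the stated bound on $\|z^*\|_0$ through an integrality argument. Put $p=|\{i:z^*_i=1\}|$, so $\|z^*\|_0=p+|F|$. Summing the budget constraint gives $\sum_{i\in F}z^*_i=k-p$, a nonnegative integer; since $0<z^*_i<1$ on $F$ it lies strictly between $0$ and $|F|$, whence $k-p\ge 1$, i.e.\ $p\le k-1$. Therefore $\|z^*\|_0=p+|F|\le (k-1)+(d+1)=k+\tfrac{m(m+1)}2$. (If $F=\varnothing$ then $z^*$ is integral with $\|z^*\|_0=k$ and the bound is trivial.) The main obstacle is the extreme-point step: one must argue that the optimum may be taken at an extreme point of the optimal face $\mathcal F$ and then use general position to certify linear independence of the fractional feature vectors; the integrality refinement afterward is exactly what sharpens the naive count $k+d+1$ to the claimed $k+\tfrac{m(m+1)}2$.
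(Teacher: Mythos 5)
Your argument is essentially correct, but it takes a genuinely different route from the paper's and, as written, proves a slightly different statement. The paper argues via first--order optimality: it computes $\partial f_\ell/\partial z_i = -\tfrac{1}{\ell}\,x_i^\top W x_i$ for a positive definite $W$ depending only on the information matrix, and the KKT conditions force this quantity to equal a common multiplier on every fractional coordinate; each fractional index therefore satisfies one and the same linear relation $\langle \tilde\phi(x_i),(\psi(W),-\mu)\rangle = 0$ (with $\psi(W)$ the vectorized upper triangle of $W$), so having more than $m(m+1)/2$ fractional coordinates would force $\psi(W)=0$ by the general-position hypothesis, contradicting $W\succ 0$. That argument bounds the fractional set of \emph{any} optimizer. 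Your proof instead exploits the polyhedral structure of the optimal face $\mathcal F$ and the standard basic-feasible-solution fact that at an extreme point the columns of $\tilde\Phi$ indexed by fractional coordinates are linearly independent, then recovers the stated constant via the nice integrality observation that $k-p\ge 1$. One side effect worth noting: in your route the independence of the fractional columns comes from extremality alone, so the general-position hypothesis is never actually used --- the sentence claiming that step ``invokes'' it is misplaced, and your argument in fact proves an (existence form of the) bound with no assumption on $X$.

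The genuine gap is the ``without loss of generality'' in the extreme-point step. You establish that \emph{some} optimal solution --- an extreme point of $\mathcal F$ --- has support at most $k+\tfrac{m(m+1)}{2}$, not that the optimizer $z^*$ one is handed does. If $\mathcal F$ is positive-dimensional (which the hypotheses do not preclude when $n > m(m+1)/2+1$), its relative-interior points are equally optimal and have support equal to the union of the supports over $\mathcal F$, which can exceed your bound. Since the theorem is later applied to the support of whatever $z^*$ the convex solver returns (to initialize the greedy algorithm), this distinction is not cosmetic; the paper's KKT argument closes it because every optimizer satisfies the stationarity conditions. To repair your version you would either need to add a rounding step (move to an extreme point of $\mathcal F$ before taking supports) or invoke the first-order conditions as the paper does.
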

The proof is identical to that of~\citep[Lemma 3.5]{wang2016}, which shows such a result for A-optimal design; we relegate it to Appendix~\ref{app:support}.

\section{Algorithms and analysis}
Solving the convex relaxation~\eqref{eq:20} does not directly provide a solution to~\eqref{eq:1}; first, we must round the relaxed solution $z^* \in \Gamma_k^c$ to a discrete solution $S \in \Gamma_k$. We present two possibilities: (i) rounding the solution of the continuous relaxation (\S\ref{sec:cont}); and (ii) a greedy approach (\S\ref{sec:greedy}). 

\subsection{Sampling from the continuous relaxation}
\label{sec:cont}
For conciseness, we concentrate on sampling without replacement, but note that these results extend with minor changes to with replacement sampling (see \citep{wang2016}). \citet{wang2016}~discuss the sampling scheme described in Alg.~\ref{alg:sample}) for \aopt; the same idea easily extends to \spopt. In particular, Alg.~\ref{alg:sample}, applied to a solution of~\eqref{eq:20}, provides the same asymptotic guarantees as those proven in~\citep[Lemma 3.2]{wang2016} for \aopt.

\begin{algorithm}[H]
  \SetAlgoLined
  \DontPrintSemicolon
  \KwData{budget $k$, $z^* \in \reals^n$}
  \KwResult{$S$ of size $k$}
  $S \leftarrow \emptyset$\;
  \While{$|S| < k$}{
    Sample $i \in [n] \setminus S$ uniformly at random\;
    Sample $x \sim \text{Bernoulli}(z^*_i)$\;
    \textbf{if} $x=1$ \textbf{then} $S \leftarrow S \cup \{i\}$\;
  }
  \KwRet{$S$}
  \caption{Sample from $z^*$}
  \label{alg:sample}
\end{algorithm}

\begin{theorem}
  \label{thm:rounding}
  Let $\Sigma_* = X^\top  \Diag(z^*) X$. Suppose $\|\Sigma_*^{-1}\|_2\kappa(\Sigma_*)\|X\|^2_\infty \log m = \mathcal O(1)$. The subset $S$ constructed by sampling as above verifies with probability $p=0.8$
  \[\obj{S} \leq \mathcal O(1) \cdot \obj{S^*}.\]
\end{theorem}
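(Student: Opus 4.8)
The plan is to adapt the rounding argument of~\citep[Lemma 3.2]{wang2016}, which establishes exactly this type of bound for the trace functional ($\ell=1$); the only structural facts about $E_\ell$ that we will need are its L\"owner monotonicity (Prop.~\ref{prop:basic}-{\it(i)}) and its degree-$\ell$ homogeneity. First I would record the easy lower bound supplied by the relaxation: the indicator vector $\bm{1}_{S^*}$ belongs to $\Gamma_k^c$, so optimality of $z^*$ for~\eqref{eq:20} gives $E_\ell(\Sigma_*^{-1})^{1/\ell} \le \obj{S^*}$. It therefore suffices to show that, with probability at least $0.8$, the random design produced by Alg.~\ref{alg:sample} satisfies $\obj{S} \le \mathcal O(1)\cdot E_\ell(\Sigma_*^{-1})^{1/\ell}$; chaining the two inequalities then yields the theorem.

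The key reduction is to a single spectral event. I claim it is enough to prove that with probability at least $0.8$ there is a universal constant $c>0$ with $X_S^\top X_S \succeq c\,\Sigma_*$. Indeed, on this event $(X_S^\top X_S)^{-1} \preceq c^{-1}\Sigma_*^{-1}$ in the L\"owner order, so monotonicity (Prop.~\ref{prop:basic}-{\it(i)}) and homogeneity give $E_\ell\bigl((X_S^\top X_S)^{-1}\bigr) \le E_\ell\bigl(c^{-1}\Sigma_*^{-1}\bigr) = c^{-\ell}E_\ell(\Sigma_*^{-1})$, and taking $\ell$-th roots produces the desired $\mathcal O(1)$ factor $c^{-1}$. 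This is the step that makes the argument insensitive to $\ell$: once the spectral domination holds, the passage to $E_\ell$ is automatic, and everything specific to our criterion is absorbed here.

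It remains to establish the domination event by matrix concentration. Writing $X_S^\top X_S = \sum_{i\in S} x_i x_i^\top$ and passing to whitened coordinates $W_i := \Sigma_*^{-1/2}x_i x_i^\top\Sigma_*^{-1/2}$, we have $\sum_i z^*_i W_i = I$, and the goal becomes a lower bound $\lambda_{\min}\!\bigl(\sum_{i\in S} W_i\bigr)\ge c$. Each $W_i$ is positive semidefinite with $\|W_i\|_2 = \|\Sigma_*^{-1/2}x_i\|_2^2 \le \|\Sigma_*^{-1}\|_2\|X\|_\infty^2 =: R$. One first shows that the marginal inclusion probabilities $\Pr[i\in S]$ generated by the sequential accept/reject scheme of Alg.~\ref{alg:sample} stay within a controlled factor of $z_i^*$, so that $\mathbb E\bigl[\sum_{i\in S} W_i\bigr]\succeq c' I$ for some $c'>0$ that degrades by at most $\kappa(\Sigma_*)$ relative to the ideal value $1$; a matrix Chernoff bound for sums of bounded independent PSD matrices then gives $\Pr\bigl[\lambda_{\min}(\sum_{i\in S}W_i)\le (1-\delta)c'\bigr]\le m\exp(-\delta^2 c'/(2R))$. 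Since $c'/R = \Omega\bigl(1/(\kappa(\Sigma_*)\|\Sigma_*^{-1}\|_2\|X\|_\infty^2)\bigr)$, the hypothesis $\|\Sigma_*^{-1}\|_2\,\kappa(\Sigma_*)\,\|X\|_\infty^2\log m = \mathcal O(1)$ is precisely what makes the exponent exceed $\log(5m)$, pushing the failure probability below $0.2$ and yielding the event of the previous paragraph with $c=(1-\delta)c'$.

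The hard part is not the concentration inequality itself but the bookkeeping for the sampling scheme: Alg.~\ref{alg:sample} does not include each $i$ independently with probability exactly $z_i^*$, since it draws indices uniformly without replacement and halts as soon as $|S|=k$, which both correlates the inclusion events and shifts their marginals. Controlling the gap between $\Pr[i\in S]$ and $z_i^*$, and showing that it inflates the concentration requirement by no more than the factor $\kappa(\Sigma_*)$ appearing in the hypothesis, is the delicate point; this is exactly the part of~\citep{wang2016} that must be transcribed carefully, whereas, as noted above, everything specific to the $E_\ell$ criterion collapses into the monotonicity-plus-homogeneity reduction.
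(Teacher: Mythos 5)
The paper contains no proof of this theorem: it is stated bare, justified only by the remark that Alg.~\ref{alg:sample} ``provides the same asymptotic guarantees as those proven in \citep[Lemma 3.2]{wang2016}'' for \aopt. Your proposal follows exactly that intended route and is correct: the lower bound $E_\ell(\Sigma_*^{-1})^{1/\ell}\le\obj{S^*}$ from feasibility of $\bm{1}_{S^*}$, the reduction of everything $\ell$-specific to the spectral event $X_S^\top X_S\succeq c\,\Sigma_*$ via L\"owner monotonicity (Prop.~\ref{prop:basic}-(i)) plus degree-$\ell$ homogeneity, and the deferral of the matrix-concentration bookkeeping for the non-independent sampling scheme to \citet{wang2016} --- which is precisely the step the paper itself defers; your explicit monotonicity-plus-homogeneity reduction is the ``easy extension'' the paper leaves implicit.
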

Theorem~\ref{thm:rounding} shows that under reasonable conditions, we can probabilistically construct a good approximation to the optimal solution in linear time, given the solution $z^*$ to the convex relaxation.

\subsection{Greedy approach}
\label{sec:greedy}
In addition to the solution based on convex relaxation, \spopt admits an intuitive greedy approach, despite not being a submodular optimization problem in general. Here, elements are removed one-by-one from a base set of experiments; this greedy removal, as opposed to greedy addition, turns out to be much more practical. Indeed, since $f_\ell$ is not defined for sets of size smaller than $k$, it is hard to greedily add experiments to the empty set and then bound the objective function after $k$ items have been added. This difficulty precludes analyses such as~\citep{wang2016b,smith2017} for optimizing non-submodular set functions by bounding their ``curvature''.

\begin{algorithm}[H]
  \SetAlgoLined
  \DontPrintSemicolon
  \KwData{matrix $X$, budget $k$, initial set $S_0$}
  \KwResult{$S$ of size $k$}
  $S \leftarrow S_0$ \\
  \While{$|S| > k$}{
    Find $i \in S$ such that $S \setminus \{i\}$ is feasible and $i$ minimizes $f_\ell(S\setminus\{i\})$\;
    $S \leftarrow S \setminus \{i\}$\;
  }
  \KwRet{$S$}
  \caption{Greedy algorithm}
  \label{alg:greedy}
\end{algorithm}

Bounding the performance of Algorithm~\ref{alg:greedy} relies on the following lemma.
\begin{lemma}
  \label{lemma:expectation}
  Let $X \in \reals^{n \times m} (n \ge m)$ be a matrix with full column rank, and let $k$ be a budget $m \le k \le n$. Let $S$ of size $k$ be subset of $[n]$ drawn with probability $\mathcal P \propto \det(X_S^\top X_S)$. Then 
\begin{equation}
  \label{eq:15}
  \mathbb E_{S \sim \mathcal P} \left[E_\ell\Big(\invp{X_S^\top X_S}\Big)\right] \le \prod\nolimits_{i=1}^\ell \frac{n-m+i}{k-m+i} \cdot E_\ell\Big(\invp{X^\top X}\Big),
\end{equation}
with equality if $X_S^\top X_S \succ 0$ for all subsets $S$ of size $k$.
\end{lemma}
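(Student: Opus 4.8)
The plan is to interpret the claim through the lens of volume sampling. The distribution $\mathcal P \propto \det(X_S^\top X_S)$ is the classical *volume sampling* distribution over $k$-subsets, for which strong normalization identities are known via the Cauchy--Binet formula. The key identity is that $\sum_{|S|=k}\det(X_S^\top X_S) = E_k(X^\top X)$, which lets me compute the normalizing constant explicitly. My target is an expectation of $E_\ell\big((X_S^\top X_S)^{-1}\big)$, so I first rewrite the integrand in a form amenable to summation over $S$. Using Prop.~\ref{prop:basic}-{\it(ii)}, I have $E_\ell\big((X_S^\top X_S)^{-1}\big) = \det\big((X_S^\top X_S)^{-1}\big)\, E_{m-\ell}(X_S^\top X_S)$, which converts the inverse-ESP into a ratio of an ESP of the (non-inverted) Gram matrix and its determinant.

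**Main computation.** With that rewriting, the numerator of the expectation becomes $\sum_{|S|=k} \det(X_S^\top X_S)\cdot E_\ell\big((X_S^\top X_S)^{-1}\big) = \sum_{|S|=k} E_{m-\ell}(X_S^\top X_S)$, since the $\det$ factors cancel (using that $\det(X_S^\top X_S)>0$ on the support). Now I apply the partial-volume representation~\eqref{eq:9}: $E_{m-\ell}(X_S^\top X_S) = \sum_{T\subseteq[m],|T|=m-\ell} \det\big((X_S^\top X_S)[T|T]\big)$. The submatrix $(X_S^\top X_S)[T|T]$ is itself a Gram matrix of the columns of $X$ indexed by $T$, restricted to rows in $S$; call this $(X_S)_T$. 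By Cauchy--Binet applied to each such $\det$, summing over $S$ of size $k$ produces a sum of $(m-\ell)\times(m-\ell)$ minors over all row-subsets, which I expect to reassemble into an ESP of $X^\top X$ scaled by a counting factor. The hard part will be tracking this combinatorics cleanly: a fixed $(m-\ell)$-element row-index set appears in $\binom{n-(m-\ell)}{k-(m-\ell)}$ many $S$, and I must verify this multiplicity collapses correctly against the normalizing constant.

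**Assembling the ratio.** Dividing numerator by the normalizer $\sum_{|S|=k}\det(X_S^\top X_S)=E_k(X^\top X)$, the multiplicity factors combine into the product $\prod_{i=1}^\ell \frac{n-m+i}{k-m+i}$. I anticipate this emerges because the ratio of binomial coefficients $\binom{n-(m-\ell)}{k-(m-\ell)} / $ (the analogous count for the determinant term) telescopes into exactly that product after the $E_{m-\ell}$-versus-$E_k$ bookkeeping; re-applying Prop.~\ref{prop:basic}-{\it(ii)} to rewrite $E_{m-\ell}(X^\top X)$ back in terms of $E_\ell\big((X^\top X)^{-1}\big)$ closes the loop. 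The inequality (as opposed to equality) arises precisely because some subsets $S$ may have $X_S^\top X_S$ singular, so they carry zero probability mass yet the full Cauchy--Binet expansion implicitly sums over all $S$; dropping those degenerate terms can only decrease the left side relative to the clean algebraic identity, giving $\le$ with equality exactly when $X_S^\top X_S\succ 0$ for every $S$.

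**Anticipated obstacle.** The principal difficulty is the double-counting argument in the middle step: ensuring that when I expand $E_{m-\ell}$ of each Gram submatrix via Cauchy--Binet and sum over all $k$-subsets $S$, the resulting aggregate is exactly a constant multiple of $E_{m-\ell}(X^\top X)$, with the constant being the correct binomial multiplicity. This requires care because the row-selection (choice of $S$) and column-selection (choice of $T$) interact, and I must confirm the minors that appear are precisely the minors defining $E_{m-\ell}(X^\top X)$ and not some spurious off-diagonal cross terms. Once this counting identity is pinned down, the remaining algebra --- cancelling determinants and simplifying the product of binomial ratios --- is routine.
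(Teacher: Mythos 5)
Your proposal follows essentially the same route as the paper's proof: convert $E_\ell\bigl((X_S^\top X_S)^{-1}\bigr)$ to $E_{m-\ell}(X_S^\top X_S)/\det(X_S^\top X_S)$ via Prop.~\ref{prop:basic}-(ii), expand $E_{m-\ell}$ into determinants of column-submatrix Gram matrices, apply Cauchy--Binet with the multiplicity count $\binom{n-(m-\ell)}{k-(m-\ell)}$ for the numerator against $\binom{n-m}{k-m}$ for the denominator, and handle singular $S$ exactly as you describe to get the inequality. The one slip is your stated normalizer $\sum_{|S|=k}\det(X_S^\top X_S)=E_k(X^\top X)$, which is false for $k>m$ (that ESP vanishes); the correct value is $\binom{n-m}{k-m}\det(X^\top X)$, and since your subsequent bookkeeping uses the correct binomial count, the telescoped product $\prod_{i=1}^\ell\frac{n-m+i}{k-m+i}$ comes out right.
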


Lemma~\ref{lemma:expectation} extends a result from~\citep[Lemma 3.9]{avron2013} on column-subset selection via volume sampling to all ESPs. In particular, it follows that removing one element (by volume sampling a set of size $n-1$) will in expectation decrease $f$ by a multiplicative factor which is clearly also attained by a greedy minimization. This argument then entails the following bound on Algorithm~\ref{alg:greedy}'s performance. Proofs of both results are in Appendix~\ref{app:greedy}.
\begin{theorem}
  \label{thm:bound}
  Algorithm~\ref{alg:greedy} initialized with a set $S_0$ of size $n_0$ produces a set $S^+$ of size $k$ such that 
  \begin{equation}
    \label{eq:14}
    E_\ell\Big(\invp{X_{S^+}^\top X_{S^+}}\Big)\le\prod\nolimits_{j=1}^{\ell}\frac{n_0-m+j}{k-m+j} \cdot E_\ell\Big(\invp{X_{S_0}^\top X_{S_0}}\Big)
  \end{equation}
\end{theorem}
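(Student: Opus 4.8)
The plan is to reduce the deterministic greedy removal to a single application of Lemma~\ref{lemma:expectation} per step, and then telescope the resulting bounds across all $n_0-k$ removals. First I would note that Algorithm~\ref{alg:greedy} performs exactly $n_0-k$ iterations, each shrinking the current set $S$ of some size $p$ to a set of size $p-1$ by deleting the feasible element $i$ that minimizes $E_\ell\big((X_{S\setminus\{i\}}^\top X_{S\setminus\{i\}})^{-1}\big)$; minimizing $f_\ell$ is equivalent to minimizing this quantity, since $\tfrac1\ell\log(\cdot)$ is monotone increasing. It therefore suffices to establish a clean per-step contraction factor and then chain the steps together.

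The key step is a \emph{min-is-at-most-expectation} bound for one removal. I would apply Lemma~\ref{lemma:expectation} with the current set $S$ (of size $p$) playing the role of the full index set, the matrix $X_S$ in place of $X$, and budget $p-1$. Volume sampling a subset $S'$ of size $p-1$ from $S$ (with probability $\propto \det(X_{S'}^\top X_{S'})$) then yields
\begin{equation*}
  \mathbb E_{S'}\Big[E_\ell\big((X_{S'}^\top X_{S'})^{-1}\big)\Big] \le \prod_{i=1}^{\ell} \frac{p-m+i}{(p-1)-m+i}\cdot E_\ell\big((X_S^\top X_S)^{-1}\big).
\end{equation*}
Because volume sampling assigns probability zero to every singular (infeasible) removal, this expectation is a weighted average over precisely the feasible single-element deletions; hence the greedy choice, being the minimizer over exactly these, satisfies the same upper bound. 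Since $X_S$ has full column rank and $p>m$, at least one feasible removal always exists (a spanning family of more than $m$ vectors in $\reals^m$ contains a redundant one), so the algorithm never stalls and every intermediate set remains full-rank.

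Finally I would multiply the per-step bounds over $p=n_0,n_0-1,\ldots,k+1$. Interchanging the order of the two products and observing that, for each fixed $i$, the factor telescopes (the numerator $p-m+i$ at step $p$ cancels the denominator $(p-1)-m+i$ at step $p-1$), the cumulative factor collapses to $\prod_{j=1}^{\ell}\tfrac{n_0-m+j}{k-m+j}$, which is exactly~\eqref{eq:14}. The main obstacle is the first move: justifying that the deterministic greedy value is dominated by the volume-sampling expectation. This hinges on the facts that infeasible removals carry zero sampling weight (so the expectation really is an average over the candidates greedy ranks) and that feasibility is preserved throughout; once these are in place, the telescoping product is purely mechanical.
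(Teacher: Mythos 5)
Your proposal is correct and follows essentially the same route as the paper: a per-step ``greedy minimum is at most the volume-sampling expectation'' bound obtained from Lemma~\ref{lemma:expectation} applied to the current set with budget one less, followed by telescoping the factors over the $n_0-k$ removals (the paper phrases this as an induction on the number of removed elements). Your explicit remarks that infeasible deletions receive zero weight under volume sampling and that a feasible removal always exists are welcome clarifications of points the paper's proof leaves implicit.
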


As~\citet{wang2016} note regarding \aopt,~\eqref{eq:14} provides a trivial optimality bound on the greedy algorithm when initialized with $S_0 = \{1, \ldots, n\}$ ($S^*$ denotes the optimal set):
\begin{equation}
  \label{eq:4}
  \obj{S^+} \le \frac{n-m+\ell}{k-m+1}f(\{1,\ldots, n\}) \le \frac{n-m+\ell}{k-m+1}\obj{S^*}
\end{equation}
However, this naive initialization can be replaced by the support $\|z^*\|_0$ of the convex relaxation solution; in the common scenario described by Theorem~\ref{thm:support}, we then obtain the following result:
\begin{theorem}
    Let $\tilde \phi$ be the mapping defined in \ref{thm:support}, and assume that all choices of $m(m+1)/2$ distinct rows of $X$ always have their mapping independent mappings for $\tilde \phi$. Then the outcome of the greedy algorithm initialized with the support of the solution to the continuous relaxation verifies 
\[f_\ell(S^+) \le \log \left(\frac{k + m(m-1)/2+\ell}{k-m+1}\right) + f_\ell(S^*).\]
\end{theorem}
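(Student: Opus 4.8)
The plan is to instantiate the greedy guarantee of Theorem~\ref{thm:bound} at the initialization $S_0 = \mathrm{supp}(z^*)$, to control the size of $S_0$ through Theorem~\ref{thm:support}, and finally to replace $f_\ell(S_0)$ by $f_\ell(S^*)$ using the fact that the relaxation~\eqref{eq:20} lower-bounds the discrete optimum together with the L\"owner monotonicity of $E_\ell$ from Prop.~\ref{prop:basic}-{\it(i)}.

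First I would check that $S_0$ is an admissible initialization and bound its cardinality. Since $z^* \in \Gamma_k^c$, the matrix $X^\top \Diag(z^*) X$ is positive definite, and because $0 \le z_i^* \le 1$,
\[X^\top \Diag(z^*) X = \sum_{i \in S_0} z_i^* x_i x_i^\top \preceq \sum_{i \in S_0} x_i x_i^\top = X_{S_0}^\top X_{S_0},\]
so $X_{S_0}^\top X_{S_0} \succ 0$ and $S_0$ is feasible. Prop.~\ref{prop:saturated} gives $\|z^*\|_1 = k$, and since each coordinate is at most $1$ this forces $n_0 := \|z^*\|_0 \ge k$, so the greedy run is well-posed; the independence hypothesis then lets me invoke Theorem~\ref{thm:support} for the upper bound $n_0 \le k + \tfrac{m(m+1)}{2}$.

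Next I would apply Theorem~\ref{thm:bound} with this $S_0$, take $\tfrac1\ell\log(\cdot)$ of~\eqref{eq:14}, and bound the product term. Each factor $\tfrac{n_0-m+j}{k-m+j}$ is at least $1$ (as $n_0 \ge k$), and for every $1 \le j \le \ell$ the numerator is maximized and the denominator minimized by the endpoints, giving
\[\frac{n_0-m+j}{k-m+j} \le \frac{n_0-m+\ell}{k-m+1}.\]
Hence the geometric mean of the factors, which is exactly what $\tfrac1\ell\sum_j \log(\cdot)$ computes, is at most $\log\tfrac{n_0-m+\ell}{k-m+1}$. Substituting $n_0 \le k + \tfrac{m(m+1)}{2}$ and using $\tfrac{m(m+1)}{2}-m = \tfrac{m(m-1)}{2}$ turns this into the claimed $\log\tfrac{k+m(m-1)/2+\ell}{k-m+1}$.

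The step I expect to carry the real content is replacing $f_\ell(S_0)$ by $f_\ell(S^*)$, which I would argue in two moves. The L\"owner inequality $X^\top\Diag(z^*)X \preceq X_{S_0}^\top X_{S_0}$ established above inverts to $\invp{X_{S_0}^\top X_{S_0}} \preceq \invp{X^\top\Diag(z^*)X}$, whence Prop.~\ref{prop:basic}-{\it(i)} yields $E_\ell\bigl(\invp{X_{S_0}^\top X_{S_0}}\bigr) \le E_\ell\bigl(\invp{X^\top\Diag(z^*)X}\bigr)$, i.e. $f_\ell(S_0) \le f_\ell(z^*)$ (reading $f_\ell(z^*)$ as the relaxed objective). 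Since every discrete design is feasible for~\eqref{eq:20} via its indicator vector, $z^*$ optimizes a relaxation of~\eqref{eq:1}, so $f_\ell(z^*) \le f_\ell(S^*)$. Chaining $f_\ell(S_0) \le f_\ell(z^*) \le f_\ell(S^*)$ and combining with the product bound gives the theorem. The only subtle point is that the greedy bound must be fed the \emph{support} of $z^*$ rather than $z^*$ itself without loss, and it is precisely the monotonicity argument that secures this passage.
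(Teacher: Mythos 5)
Your proof is correct and follows exactly the route the paper intends (the paper states this theorem without an explicit proof, as an immediate combination of Theorem~\ref{thm:bound} applied with $S_0=\mathrm{supp}(z^*)$, the support bound of Theorem~\ref{thm:support}, and the chain $f_\ell(S_0)\le f_\ell(z^*)\le f_\ell(S^*)$ via L\"owner monotonicity and the fact that the relaxation lower-bounds the discrete optimum). You also correctly identify the one point that differs from the naive bound~\eqref{eq:4}: since $S_0$ need not contain $S^*$, the comparison must pass through the relaxed optimum $z^*$ rather than through set inclusion.
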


\subsection{Computational considerations}
Computing the $\ell$-th elementary symmetric polynomial on a vector of size $m$ can be done in $\mathcal O(m \log^2 \ell)$ using Fast Fourier Transform for polynomial multiplication, due to the construction introduced by Ben-Or (see~\citep{shpilka01}); hence, computing $f_\ell(S)$ requires $\mathcal O(nm^2)$ time, where the cost is dominated by computing $X_S^\top X_S$. Alg.~\ref{alg:sample} runs in expectation in $\mathcal O(n)$; Alg.~\ref{alg:greedy} costs $\mathcal O(m^2n^3)$.

\section{Further Implications}
\label{sec:dual}
We close our theoretical presentation by discussing a potentially important geometric problem related to \spopt. In particular, our motivation here is the dual problem of \dopt (i.e., dual to the convex relaxation of \dopt): this is nothing but the well-known \emph{Minimum Volume Covering Ellipsoid (MVCE)} problem, which is a problem of great interest to the optimization community in its own right---see the recent book~\citep{todd2016} for an excellent account. 

With this motivation, we develop the dual formulation for \spopt now. We start by deriving $\nabla E_\ell(A)$, for which we recall that $E_\ell(\cdot)$ is a spectral function, whereby the spectral calculus of~\citet{Lewis1996} becomes applicable, saving us from intractable multilinear algebra~\citep{JAIN20111111}. More precisely, say $U^\top \Lambda U$ is the eigendecomposition of $A$, with $U$ unitary. Then, as $E_\ell(A) = e_\ell \circ \lambda(A)$,
\begin{equation}
  \label{eq:deriv}
  \nabla E_\ell(A) = U^\top \Diag(\nabla e_\ell (\Lambda)) U = U^\top \Diag(e_{\ell-1} (\Lambda^{-(i)})) U.
\end{equation}
We can now derive the dual of \spopt (we consider only $z\ge 0$); in this case  problem~\eqref{eq:20} is
\begin{equation*}
  \sup_{A \succ 0, z \ge 0} \inf_{\mu \in \reals, H} -\tfrac 1 \ell \log E_\ell(A) - \tr(H(\inv A - X^\top \Diag(z) X)) - \mu (\bm{1}^\top z - k),
\end{equation*}
which admits as dual 
\begin{equation}
  \label{eq:dual}
  \inf_{\mu \in \reals, H} \sup_{A \succ 0, z \ge 0} \underbrace{-\tfrac 1 \ell \log E_\ell(A) - \tr(H\inv A)}_{g(A)}  + \tr(H X^\top \Diag(z) X) - \mu (\bm{1}^\top z-k).
\end{equation}
We easily show that $H \succeq 0$ and that $g$ reaches its maximum on $\pd$ for $A$ such that $\nabla g= 0$. Rewriting $A = U^\top \Lambda U$, we have
\begin{equation*}
  \nabla g(A) = 0 \iff \Lambda \Diag\big(e_{\ell-1}(\Lambda_{(i)})\big) \Lambda = e_\ell(\Lambda)UHU^\top.
\end{equation*}
In particular, $H$ and $A$ are co-diagonalizable, with $\Lambda \Diag(e_{\ell-1}(\Lambda_{(i)})) \Lambda = \Diag(h_1, \ldots, h_m)$. The eigenvalues of $A$ must thus satisfy the system of equations
\begin{equation*}
  \lambda_i^2 e_{\ell-1}(\lambda_1, \ldots, \lambda_{i-1}, \lambda_{i+1}, \ldots, \lambda_m) = h_i e_\ell(\lambda_1, \ldots, \lambda_m), \quad 1 \le i \le m.
\end{equation*}
Let $a(H)$ be such a matrix (notice, $a(H)=\nabla g^*(0)$). Since $f_\ell$ is convex, $g(a(H)) = f_\ell^\star(-H)$ where $f_\ell^\star$ is the Fenchel conjugate of $f_\ell$ . Finally, the dual optimization problem is given by 
\[\sup_{x_i^\top H x_i \le 1, H \succeq 0} f_\ell^\star(-H) \quad = \quad\sup_{x_i^\top H x_i \le 1, H \succeq 0} \tfrac 1 \ell \log E_\ell(a(H)) \]
Details of the calculation are provided in Appendix~\ref{app:dual}. In the general case, deriving $a(H)$ or even $E_\ell(a(H))$ does not admit a closed form that we know of. Nevertheless, we recover the well-known duals of \aopt and \dopt as special cases.
\begin{corr}
For $\ell=1$, $a(H) = \tr(H^{1/2})H^{1/2}$ and for $\ell = m$, $a(H) = H$. Consequently, we recover the dual formulations of A- and \dopt. 
\end{corr}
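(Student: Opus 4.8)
The plan is to specialize the eigenvalue system
\[
  \lambda_i^2\, e_{\ell-1}(\lambda_1,\ldots,\lambda_{i-1},\lambda_{i+1},\ldots,\lambda_m) = h_i\, e_\ell(\lambda_1,\ldots,\lambda_m),\qquad 1\le i\le m,
\]
to the two endpoints $\ell=1$ and $\ell=m$, where the symmetric polynomials collapse to trace and determinant respectively. Since the derivation of $a(H)$ already establishes that $A$ and $H$ are co-diagonalizable (sharing the unitary $U$), it suffices to solve for the eigenvalues $\lambda_i$ of $a(H)$ as functions of the eigenvalues $h_i$ of $H$; the matrix identity then follows by applying the resulting scalar map spectrally, i.e.\ $a(H)=U^\top\Diag(\lambda_i)U$.

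For $\ell=1$, first I would substitute $e_0\equiv 1$ and $e_1(\lambda)=\sum_j\lambda_j=\tr(A)$, so the system reduces to $\lambda_i^2 = h_i\tr(A)$, giving $\lambda_i=\sqrt{h_i}\,\sqrt{\tr(A)}$ (the positive root, since $a(H)\succ0$). The one genuinely non-trivial step is pinning down the scalar $\sqrt{\tr(A)}$: summing $\lambda_i=\sqrt{h_i}\,\sqrt{\tr(A)}$ over $i$ yields $\tr(A)=\sqrt{\tr(A)}\sum_j\sqrt{h_j}$, hence $\sqrt{\tr(A)}=\tr(H^{1/2})$. Back-substituting gives $\lambda_i=\tr(H^{1/2})\sqrt{h_i}$, and spectrally this is exactly $a(H)=\tr(H^{1/2})H^{1/2}$.

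For $\ell=m$ the computation is immediate: $e_m(\lambda)=\prod_j\lambda_j=\det(A)$ and $e_{m-1}(\lambda_{(i)})=\prod_{j\ne i}\lambda_j=\det(A)/\lambda_i$, so the $i$-th equation becomes $\lambda_i\det(A)=h_i\det(A)$, whence $\lambda_i=h_i$ and $a(H)=H$.

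Finally, I would substitute these back into the dual objective $\tfrac1\ell\log E_\ell(a(H))$. For $\ell=1$, $E_1(a(H))=\tr(a(H))=\tr(H^{1/2})^2$, so the objective becomes $2\log\tr(H^{1/2})$, which together with the constraints $x_i^\top Hx_i\le1$, $H\succeq0$ is precisely the classical A-optimal dual. For $\ell=m$, $E_m(a(H))=\det(H)$, giving $\tfrac1m\log\det(H)$, the MVCE / \dopt dual. I expect no serious obstacle here; the only care needed is to confirm that the constraint set transcribes to the standard normalization of each dual, and that the $\ell=1$ scalar identity $\sqrt{\tr(A)}=\tr(H^{1/2})$ is the unique admissible solution (which follows from positivity of the eigenvalues).
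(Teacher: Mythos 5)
Your proposal is correct and follows exactly the route the paper intends: the paper states the corollary without an explicit proof, but your argument is precisely the specialization of the eigenvalue system $\lambda_i^2 e_{\ell-1}(\lambda_{(i)}) = h_i e_\ell(\lambda)$ derived in Appendix~\ref{app:dual}, using co-diagonalizability of $A$ and $H$ and solving for the spectrum at $\ell=1$ and $\ell=m$. The one step that genuinely requires an idea --- fixing the scalar $\sqrt{\tr(A)} = \tr(H^{1/2})$ by summing the relations $\lambda_i = \sqrt{h_i}\sqrt{\tr(A)}$ --- is handled correctly, and the back-substitution into $\tfrac1\ell \log E_\ell(a(H))$ recovers the standard A- and D-optimal duals as claimed.
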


\section{Experimental results}
We compared the following methods to solving~\eqref{eq:1}:
\begin{itemize}
\vspace*{-5pt}
  \setlength{\itemsep}{0pt}
  \item \textsc{Unif} / \textsc{UnifFdv}: $k$ experiments are sampled uniformly / with Fedorov exchange
  \item \textsc{Greedy} / \textsc{GreedyFdv}: greedy algorithm (relaxed init.) / with Fedorov exchange
  \item \textsc{Sample}: sampling (relaxed init.) as in Algorithm~\ref{alg:sample}.
\vspace*{-5pt}
\end{itemize}
We also report the results for solution of the continuous relaxation (\textsc{Relax}); the convex optimization was solved using projected gradient descent, the projection being done with the code from~\citep{Davis2016}.

\subsection{Synthetic experiments: optimization comparison}
We generated the experimental matrix $X$ by sampling $n$ vectors of size $m$ from the multivariate Gaussian distribution of mean 0 and sparse precision $\inv \Sigma$ (density $d$ ranging from 0.3 to 0.9). Due to the runtime of Fedorov methods, results are reported for only one run; results averaged over multiple iterations (as well as for other distributions over $X$) are provided in Appendix~\ref{app:synth-more}.

As shown in Fig.~\ref{fig:synth-precision}, the greedy algorithm applied to the convex relaxation's support outperforms sampling from the convex relaxation solution, and does as well as the usual Fedorov algorithm \textsc{UnifFdv}; \textsc{GreedyFdv} marginally improves upon the greedy algorithm and \textsc{UnifFdv}. Strikingly, \textsc{Greedy} provides designs of comparable quality to \textsc{UnifFdv}; furthermore, as very few local exchanges improve upon its design, running the Fedorov algorithm with \textsc{Greedy} initialization is much faster (Table~\ref{tab:runtimes}); this is confirmed by Table~\ref{tab:intersection}, which shows the number of experiments in common for different algorithms: \textsc{Greedy} and \textsc{GreedyFdv} only differ on very few elements. As the budget $k$ increases, the difference in performances between \textsc{Sample}, \textsc{Greedy} and the continuous relaxation decreases, and the simpler \textsc{Sample} algorithm becomes competitive. Table~\ref{tab:support} reports the support of the continuous relaxation solution for \spopt with $\ell = 10$.

\begin{table}[h]
  \centering
 \footnotesize
  \caption{Runtimes (s) ($\ell=10$, $d=0.6$)}
  \begin{tabular}{cccccc}
    \toprule
    $k$ & $40$ & $80$ & $120$ & $160$ & $200$ \\
    \midrule
    \textsc{Greedy} & 2.8 $ 10^1$ & 2.7 $ 10^1$ & 3.1 $ 10^1$ & 4.0 $ 10^1$ & 5.2 $ 10^1$ \\
    \textsc{GreedyFdv} & 6.6 $ 10^1$ & 2.2 $ 10^2$ & 3.2 $ 10^2$ & 1.2 $ 10^2$ & 1.3 $ 10^2$ \\
    \textsc{UnifFdv} & 1.6 $ 10^3$ & 4.1 $ 10^3$ & 6.0 $ 10^3$ & 6.2 $ 10^3$ & 4.7 $ 10^3$ \\
    \bottomrule
  \end{tabular}
  \label{tab:runtimes}
\end{table}
\vskip -6pt
\begin{table}[h]
  \centering
  \footnotesize
  \caption{Common items between solutions ($\ell=10$, $d=0.6$)}
  \begin{tabular}{cccccc}
    \toprule
    $k$ & $40$ & $80$ & $120$ & $160$ & $200$ \\
    \midrule
    |\textsc{Greedy} $\cap$ \textsc{UnifFdv}| & 26 & 76 & 114 & 155 & 200 \\
    |\textsc{Greedy} $\cap$ \textsc{GreedyFdv}| & 40 & 78 & 117 & 160 & 200 \\
    |\textsc{UnifFdv} $\cap$ \textsc{GreedyFdv}| & 26 & 75 & 113 & 155 & 200 \\
    \bottomrule
  \end{tabular}
  \label{tab:intersection}
\end{table}
\vskip -6pt
\begin{table}[h]
  \centering
 \footnotesize
  \caption{$\|z^*\|_0$ ($\ell=10$, $d=0.6$)}
  \begin{tabular}{cccccc}
    \toprule
    $k$ & $40$ & $80$ & $120$ & $160$ & $200$ \\
    \midrule
    $d = 0.3$ & 93 $\pm$ 3 & 117 $\pm$ 3 & 148 $\pm$ 2 & 181 $\pm$ 3 & 213 $\pm$ 2 \\
    $d = 0.6$ & 92 $\pm$ 7 & 117 $\pm$ 4 & 145 $\pm$ 4 & 180 $\pm$ 3 & 214 $\pm$ 4 \\
    $d = 0.9$ & 88 $\pm$ 3 & 116 $\pm$ 3 & 147 $\pm$ 4 & 179 $\pm$ 3 & 214 $\pm$ 1 \\
    \bottomrule
  \end{tabular}
  \label{tab:support}
\end{table}

\begin{figure}[h]
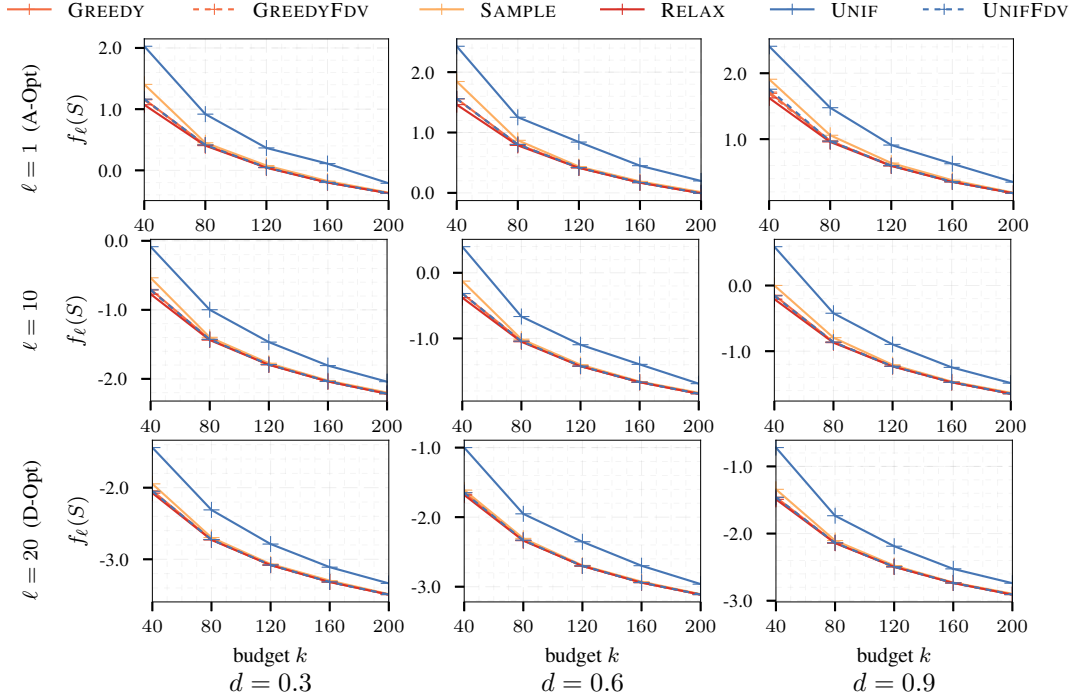

  \centering
\begingroup%
\makeatletter%
\begin{pgfpicture}%
\pgfpathrectangle{\pgfpointorigin}{\pgfqpoint{5.673071in}{0.243822in}}%
\pgfusepath{use as bounding box, clip}%
\begin{pgfscope}%
\pgfsetbuttcap%
\pgfsetmiterjoin%
\definecolor{currentfill}{rgb}{1.000000,1.000000,1.000000}%
\pgfsetfillcolor{currentfill}%
\pgfsetlinewidth{0.000000pt}%
\definecolor{currentstroke}{rgb}{1.000000,1.000000,1.000000}%
\pgfsetstrokecolor{currentstroke}%
\pgfsetdash{}{0pt}%
\pgfpathmoveto{\pgfqpoint{0.000000in}{0.000000in}}%
\pgfpathlineto{\pgfqpoint{5.673071in}{0.000000in}}%
\pgfpathlineto{\pgfqpoint{5.673071in}{0.243822in}}%
\pgfpathlineto{\pgfqpoint{0.000000in}{0.243822in}}%
\pgfpathclose%
\pgfusepath{fill}%
\end{pgfscope}%
\begin{pgfscope}%
\pgfsetrectcap%
\pgfsetroundjoin%
\pgfsetlinewidth{0.803000pt}%
\definecolor{currentstroke}{rgb}{0.956863,0.427451,0.262745}%
\pgfsetstrokecolor{currentstroke}%
\pgfsetdash{}{0pt}%
\pgfpathmoveto{\pgfqpoint{0.044444in}{0.104933in}}%
\pgfpathlineto{\pgfqpoint{0.266667in}{0.104933in}}%
\pgfusepath{stroke}%
\end{pgfscope}%
\begin{pgfscope}%
\pgfsetbuttcap%
\pgfsetroundjoin%
\definecolor{currentfill}{rgb}{0.956863,0.427451,0.262745}%
\pgfsetfillcolor{currentfill}%
\pgfsetlinewidth{0.401500pt}%
\definecolor{currentstroke}{rgb}{0.956863,0.427451,0.262745}%
\pgfsetstrokecolor{currentstroke}%
\pgfsetdash{}{0pt}%
\pgfsys@defobject{currentmarker}{\pgfqpoint{-0.029167in}{-0.029167in}}{\pgfqpoint{0.029167in}{0.029167in}}{%
\pgfpathmoveto{\pgfqpoint{-0.029167in}{0.000000in}}%
\pgfpathlineto{\pgfqpoint{0.029167in}{0.000000in}}%
\pgfpathmoveto{\pgfqpoint{0.000000in}{-0.029167in}}%
\pgfpathlineto{\pgfqpoint{0.000000in}{0.029167in}}%
\pgfusepath{stroke,fill}%
}%
\begin{pgfscope}%
\pgfsys@transformshift{0.155556in}{0.104933in}%
\pgfsys@useobject{currentmarker}{}%
\end{pgfscope}%
\end{pgfscope}%
\begin{pgfscope}%
\pgftext[x=0.355556in,y=0.066044in,left,base]{\fontsize{8.000000}{9.600000}\selectfont \textsc{Greedy}}%
\end{pgfscope}%
\begin{pgfscope}%
\pgfsetbuttcap%
\pgfsetroundjoin%
\pgfsetlinewidth{0.803000pt}%
\definecolor{currentstroke}{rgb}{0.956863,0.427451,0.262745}%
\pgfsetstrokecolor{currentstroke}%
\pgfsetdash{{2.200000pt}{2.200000pt}}{0.000000pt}%
\pgfpathmoveto{\pgfqpoint{1.013242in}{0.104933in}}%
\pgfpathlineto{\pgfqpoint{1.235465in}{0.104933in}}%
\pgfusepath{stroke}%
\end{pgfscope}%
\begin{pgfscope}%
\pgfsetbuttcap%
\pgfsetroundjoin%
\definecolor{currentfill}{rgb}{0.956863,0.427451,0.262745}%
\pgfsetfillcolor{currentfill}%
\pgfsetlinewidth{0.401500pt}%
\definecolor{currentstroke}{rgb}{0.956863,0.427451,0.262745}%
\pgfsetstrokecolor{currentstroke}%
\pgfsetdash{}{0pt}%
\pgfsys@defobject{currentmarker}{\pgfqpoint{-0.029167in}{-0.029167in}}{\pgfqpoint{0.029167in}{0.029167in}}{%
\pgfpathmoveto{\pgfqpoint{-0.029167in}{0.000000in}}%
\pgfpathlineto{\pgfqpoint{0.029167in}{0.000000in}}%
\pgfpathmoveto{\pgfqpoint{0.000000in}{-0.029167in}}%
\pgfpathlineto{\pgfqpoint{0.000000in}{0.029167in}}%
\pgfusepath{stroke,fill}%
}%
\begin{pgfscope}%
\pgfsys@transformshift{1.124353in}{0.104933in}%
\pgfsys@useobject{currentmarker}{}%
\end{pgfscope}%
\end{pgfscope}%
\begin{pgfscope}%
\pgftext[x=1.324353in,y=0.066044in,left,base]{\fontsize{8.000000}{9.600000}\selectfont \textsc{GreedyFdv}}%
\end{pgfscope}%
\begin{pgfscope}%
\pgfsetrectcap%
\pgfsetroundjoin%
\pgfsetlinewidth{0.803000pt}%
\definecolor{currentstroke}{rgb}{0.992157,0.682353,0.380392}%
\pgfsetstrokecolor{currentstroke}%
\pgfsetdash{}{0pt}%
\pgfpathmoveto{\pgfqpoint{2.203013in}{0.104933in}}%
\pgfpathlineto{\pgfqpoint{2.425235in}{0.104933in}}%
\pgfusepath{stroke}%
\end{pgfscope}%
\begin{pgfscope}%
\pgfsetbuttcap%
\pgfsetroundjoin%
\definecolor{currentfill}{rgb}{0.992157,0.682353,0.380392}%
\pgfsetfillcolor{currentfill}%
\pgfsetlinewidth{0.401500pt}%
\definecolor{currentstroke}{rgb}{0.992157,0.682353,0.380392}%
\pgfsetstrokecolor{currentstroke}%
\pgfsetdash{}{0pt}%
\pgfsys@defobject{currentmarker}{\pgfqpoint{-0.029167in}{-0.029167in}}{\pgfqpoint{0.029167in}{0.029167in}}{%
\pgfpathmoveto{\pgfqpoint{-0.029167in}{0.000000in}}%
\pgfpathlineto{\pgfqpoint{0.029167in}{0.000000in}}%
\pgfpathmoveto{\pgfqpoint{0.000000in}{-0.029167in}}%
\pgfpathlineto{\pgfqpoint{0.000000in}{0.029167in}}%
\pgfusepath{stroke,fill}%
}%
\begin{pgfscope}%
\pgfsys@transformshift{2.314124in}{0.104933in}%
\pgfsys@useobject{currentmarker}{}%
\end{pgfscope}%
\end{pgfscope}%
\begin{pgfscope}%
\pgftext[x=2.514124in,y=0.066044in,left,base]{\fontsize{8.000000}{9.600000}\selectfont \textsc{Sample}}%
\end{pgfscope}%
\begin{pgfscope}%
\pgfsetrectcap%
\pgfsetroundjoin%
\pgfsetlinewidth{0.803000pt}%
\definecolor{currentstroke}{rgb}{0.843137,0.188235,0.152941}%
\pgfsetstrokecolor{currentstroke}%
\pgfsetdash{}{0pt}%
\pgfpathmoveto{\pgfqpoint{3.148167in}{0.104933in}}%
\pgfpathlineto{\pgfqpoint{3.370389in}{0.104933in}}%
\pgfusepath{stroke}%
\end{pgfscope}%
\begin{pgfscope}%
\pgfsetbuttcap%
\pgfsetroundjoin%
\definecolor{currentfill}{rgb}{0.843137,0.188235,0.152941}%
\pgfsetfillcolor{currentfill}%
\pgfsetlinewidth{0.401500pt}%
\definecolor{currentstroke}{rgb}{0.843137,0.188235,0.152941}%
\pgfsetstrokecolor{currentstroke}%
\pgfsetdash{}{0pt}%
\pgfsys@defobject{currentmarker}{\pgfqpoint{-0.029167in}{-0.029167in}}{\pgfqpoint{0.029167in}{0.029167in}}{%
\pgfpathmoveto{\pgfqpoint{-0.029167in}{0.000000in}}%
\pgfpathlineto{\pgfqpoint{0.029167in}{0.000000in}}%
\pgfpathmoveto{\pgfqpoint{0.000000in}{-0.029167in}}%
\pgfpathlineto{\pgfqpoint{0.000000in}{0.029167in}}%
\pgfusepath{stroke,fill}%
}%
\begin{pgfscope}%
\pgfsys@transformshift{3.259278in}{0.104933in}%
\pgfsys@useobject{currentmarker}{}%
\end{pgfscope}%
\end{pgfscope}%
\begin{pgfscope}%
\pgftext[x=3.459278in,y=0.066044in,left,base]{\fontsize{8.000000}{9.600000}\selectfont \textsc{Relax}}%
\end{pgfscope}%
\begin{pgfscope}%
\pgfsetrectcap%
\pgfsetroundjoin%
\pgfsetlinewidth{0.803000pt}%
\definecolor{currentstroke}{rgb}{0.270588,0.458824,0.705882}%
\pgfsetstrokecolor{currentstroke}%
\pgfsetdash{}{0pt}%
\pgfpathmoveto{\pgfqpoint{4.037470in}{0.104933in}}%
\pgfpathlineto{\pgfqpoint{4.259693in}{0.104933in}}%
\pgfusepath{stroke}%
\end{pgfscope}%
\begin{pgfscope}%
\pgfsetbuttcap%
\pgfsetroundjoin%
\definecolor{currentfill}{rgb}{0.270588,0.458824,0.705882}%
\pgfsetfillcolor{currentfill}%
\pgfsetlinewidth{0.401500pt}%
\definecolor{currentstroke}{rgb}{0.270588,0.458824,0.705882}%
\pgfsetstrokecolor{currentstroke}%
\pgfsetdash{}{0pt}%
\pgfsys@defobject{currentmarker}{\pgfqpoint{-0.029167in}{-0.029167in}}{\pgfqpoint{0.029167in}{0.029167in}}{%
\pgfpathmoveto{\pgfqpoint{-0.029167in}{0.000000in}}%
\pgfpathlineto{\pgfqpoint{0.029167in}{0.000000in}}%
\pgfpathmoveto{\pgfqpoint{0.000000in}{-0.029167in}}%
\pgfpathlineto{\pgfqpoint{0.000000in}{0.029167in}}%
\pgfusepath{stroke,fill}%
}%
\begin{pgfscope}%
\pgfsys@transformshift{4.148581in}{0.104933in}%
\pgfsys@useobject{currentmarker}{}%
\end{pgfscope}%
\end{pgfscope}%
\begin{pgfscope}%
\pgftext[x=4.348581in,y=0.066044in,left,base]{\fontsize{8.000000}{9.600000}\selectfont \textsc{Unif}}%
\end{pgfscope}%
\begin{pgfscope}%
\pgfsetbuttcap%
\pgfsetroundjoin%
\pgfsetlinewidth{0.803000pt}%
\definecolor{currentstroke}{rgb}{0.270588,0.458824,0.705882}%
\pgfsetstrokecolor{currentstroke}%
\pgfsetdash{{2.200000pt}{2.200000pt}}{0.000000pt}%
\pgfpathmoveto{\pgfqpoint{4.832545in}{0.104933in}}%
\pgfpathlineto{\pgfqpoint{5.054768in}{0.104933in}}%
\pgfusepath{stroke}%
\end{pgfscope}%
\begin{pgfscope}%
\pgfsetbuttcap%
\pgfsetroundjoin%
\definecolor{currentfill}{rgb}{0.270588,0.458824,0.705882}%
\pgfsetfillcolor{currentfill}%
\pgfsetlinewidth{0.401500pt}%
\definecolor{currentstroke}{rgb}{0.270588,0.458824,0.705882}%
\pgfsetstrokecolor{currentstroke}%
\pgfsetdash{}{0pt}%
\pgfsys@defobject{currentmarker}{\pgfqpoint{-0.029167in}{-0.029167in}}{\pgfqpoint{0.029167in}{0.029167in}}{%
\pgfpathmoveto{\pgfqpoint{-0.029167in}{0.000000in}}%
\pgfpathlineto{\pgfqpoint{0.029167in}{0.000000in}}%
\pgfpathmoveto{\pgfqpoint{0.000000in}{-0.029167in}}%
\pgfpathlineto{\pgfqpoint{0.000000in}{0.029167in}}%
\pgfusepath{stroke,fill}%
}%
\begin{pgfscope}%
\pgfsys@transformshift{4.943656in}{0.104933in}%
\pgfsys@useobject{currentmarker}{}%
\end{pgfscope}%
\end{pgfscope}%
\begin{pgfscope}%
\pgftext[x=5.143656in,y=0.066044in,left,base]{\fontsize{8.000000}{9.600000}\selectfont \textsc{UnifFdv}}%
\end{pgfscope}%
\end{pgfpicture}%
\makeatother%
\endgroup%
   \subfigure{\input{"figures/synth_precision_exp_density=0.3_l=1_n=300_m=20.pgf"}}
   \subfigure{\input{"figures/synth_precision_exp_density=0.6_l=1_n=300_m=20.pgf"}}
   \subfigure{\input{"figures/synth_precision_exp_density=0.9_l=1_n=300_m=20.pgf"}}\vskip -7pt
   \subfigure{\input{"figures/synth_precision_exp_density=0.3_l=10_n=300_m=20.pgf"}}
   \subfigure{\input{"figures/synth_precision_exp_density=0.6_l=10_n=300_m=20.pgf"}}
   \subfigure{\input{"figures/synth_precision_exp_density=0.9_l=10_n=300_m=20.pgf"}}\vskip -7pt
   \subfigure{\input{"figures/synth_precision_exp_density=0.3_l=20_n=300_m=20.pgf"}}
   \subfigure{\input{"figures/synth_precision_exp_density=0.6_l=20_n=300_m=20.pgf"}}
   \subfigure{\input{"figures/synth_precision_exp_density=0.9_l=20_n=300_m=20.pgf"}}
   \caption{Synthetic experiments, $n=500$, $m=30$. The greedy algorithm performs as well as the classical Fedorov approach; as $k$ increases, all designs except \textsc{Unif} converge towards the continuous relaxation, making \textsc{Sample} the best approach for large designs.}
  \label{fig:synth-precision}
\end{figure}

\subsection{Real data}
We used the Concrete Compressive Strength dataset~\citep{Yeh1998} (with column normalization) from the UCI repository to evaluate \spopt on real data; this dataset consists in 1030 possible experiments to model concrete compressive strength as a linear combination of 8 physical parameters. In Figure~\ref{fig:concrete} (a), OED chose $k$ experiments to run to estimate $\theta$, and we report the normalized prediction error on the remaining $n-k$ experiments. The best choice of OED for this problem is of course \aopt, which shows the smallest predictive error. In Figure~\ref{fig:concrete} (b), we report the fraction of non-zero entries in the design matrix $X_S$; higher values of $\ell$ correspond to increasing sparsity. This confirms that OED allows us to scale between the extremes of \aopt and \dopt to tune desirable side-effects of the design; for example, sparsity in a design matrix can indicate not needing to tune a potentially expensive experimental parameter, which is instead left at its default value.
\begin{figure}[h]
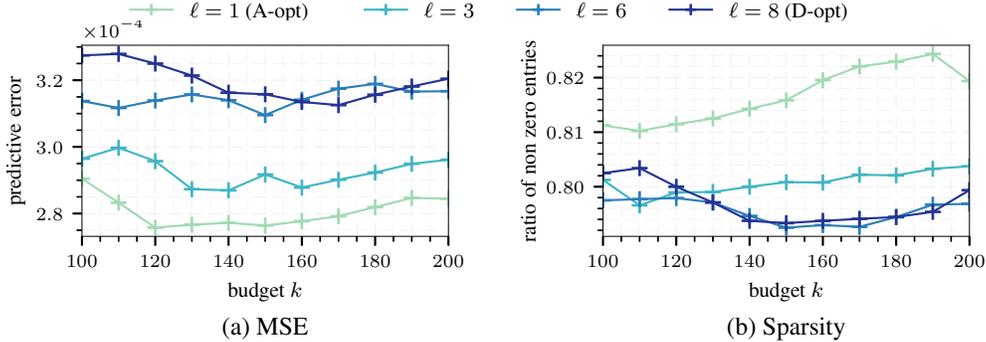

  \centering
\begingroup%
\makeatletter%
\begin{pgfpicture}%
\pgfpathrectangle{\pgfpointorigin}{\pgfqpoint{5.500000in}{0.255528in}}%
\pgfusepath{use as bounding box, clip}%
\begin{pgfscope}%
\pgfsetbuttcap%
\pgfsetmiterjoin%
\definecolor{currentfill}{rgb}{1.000000,1.000000,1.000000}%
\pgfsetfillcolor{currentfill}%
\pgfsetlinewidth{0.000000pt}%
\definecolor{currentstroke}{rgb}{1.000000,1.000000,1.000000}%
\pgfsetstrokecolor{currentstroke}%
\pgfsetdash{}{0pt}%
\pgfpathmoveto{\pgfqpoint{0.000000in}{0.000000in}}%
\pgfpathlineto{\pgfqpoint{5.500000in}{0.000000in}}%
\pgfpathlineto{\pgfqpoint{5.500000in}{0.255528in}}%
\pgfpathlineto{\pgfqpoint{0.000000in}{0.255528in}}%
\pgfpathclose%
\pgfusepath{fill}%
\end{pgfscope}%
\begin{pgfscope}%
\pgfsetrectcap%
\pgfsetroundjoin%
\pgfsetlinewidth{0.803000pt}%
\definecolor{currentstroke}{rgb}{0.631373,0.854902,0.705882}%
\pgfsetstrokecolor{currentstroke}%
\pgfsetdash{}{0pt}%
\pgfpathmoveto{\pgfqpoint{0.834897in}{0.111104in}}%
\pgfpathlineto{\pgfqpoint{1.057119in}{0.111104in}}%
\pgfusepath{stroke}%
\end{pgfscope}%
\begin{pgfscope}%
\pgfsetbuttcap%
\pgfsetroundjoin%
\definecolor{currentfill}{rgb}{0.631373,0.854902,0.705882}%
\pgfsetfillcolor{currentfill}%
\pgfsetlinewidth{1.003750pt}%
\definecolor{currentstroke}{rgb}{0.631373,0.854902,0.705882}%
\pgfsetstrokecolor{currentstroke}%
\pgfsetdash{}{0pt}%
\pgfsys@defobject{currentmarker}{\pgfqpoint{-0.029167in}{-0.029167in}}{\pgfqpoint{0.029167in}{0.029167in}}{%
\pgfpathmoveto{\pgfqpoint{-0.029167in}{0.000000in}}%
\pgfpathlineto{\pgfqpoint{0.029167in}{0.000000in}}%
\pgfpathmoveto{\pgfqpoint{0.000000in}{-0.029167in}}%
\pgfpathlineto{\pgfqpoint{0.000000in}{0.029167in}}%
\pgfusepath{stroke,fill}%
}%
\begin{pgfscope}%
\pgfsys@transformshift{0.946008in}{0.111104in}%
\pgfsys@useobject{currentmarker}{}%
\end{pgfscope}%
\end{pgfscope}%
\begin{pgfscope}%
\pgftext[x=1.146008in,y=0.072215in,left,base]{\fontsize{8.000000}{9.600000}\selectfont \(\displaystyle \ell=1\) (A-opt)}%
\end{pgfscope}%
\begin{pgfscope}%
\pgfsetrectcap%
\pgfsetroundjoin%
\pgfsetlinewidth{0.803000pt}%
\definecolor{currentstroke}{rgb}{0.254902,0.713725,0.768627}%
\pgfsetstrokecolor{currentstroke}%
\pgfsetdash{}{0pt}%
\pgfpathmoveto{\pgfqpoint{2.062265in}{0.111104in}}%
\pgfpathlineto{\pgfqpoint{2.284487in}{0.111104in}}%
\pgfusepath{stroke}%
\end{pgfscope}%
\begin{pgfscope}%
\pgfsetbuttcap%
\pgfsetroundjoin%
\definecolor{currentfill}{rgb}{0.254902,0.713725,0.768627}%
\pgfsetfillcolor{currentfill}%
\pgfsetlinewidth{1.003750pt}%
\definecolor{currentstroke}{rgb}{0.254902,0.713725,0.768627}%
\pgfsetstrokecolor{currentstroke}%
\pgfsetdash{}{0pt}%
\pgfsys@defobject{currentmarker}{\pgfqpoint{-0.029167in}{-0.029167in}}{\pgfqpoint{0.029167in}{0.029167in}}{%
\pgfpathmoveto{\pgfqpoint{-0.029167in}{0.000000in}}%
\pgfpathlineto{\pgfqpoint{0.029167in}{0.000000in}}%
\pgfpathmoveto{\pgfqpoint{0.000000in}{-0.029167in}}%
\pgfpathlineto{\pgfqpoint{0.000000in}{0.029167in}}%
\pgfusepath{stroke,fill}%
}%
\begin{pgfscope}%
\pgfsys@transformshift{2.173376in}{0.111104in}%
\pgfsys@useobject{currentmarker}{}%
\end{pgfscope}%
\end{pgfscope}%
\begin{pgfscope}%
\pgftext[x=2.373376in,y=0.072215in,left,base]{\fontsize{8.000000}{9.600000}\selectfont \(\displaystyle \ell=3\)}%
\end{pgfscope}%
\begin{pgfscope}%
\pgfsetrectcap%
\pgfsetroundjoin%
\pgfsetlinewidth{0.803000pt}%
\definecolor{currentstroke}{rgb}{0.172549,0.498039,0.721569}%
\pgfsetstrokecolor{currentstroke}%
\pgfsetdash{}{0pt}%
\pgfpathmoveto{\pgfqpoint{2.860263in}{0.111104in}}%
\pgfpathlineto{\pgfqpoint{3.082485in}{0.111104in}}%
\pgfusepath{stroke}%
\end{pgfscope}%
\begin{pgfscope}%
\pgfsetbuttcap%
\pgfsetroundjoin%
\definecolor{currentfill}{rgb}{0.172549,0.498039,0.721569}%
\pgfsetfillcolor{currentfill}%
\pgfsetlinewidth{1.003750pt}%
\definecolor{currentstroke}{rgb}{0.172549,0.498039,0.721569}%
\pgfsetstrokecolor{currentstroke}%
\pgfsetdash{}{0pt}%
\pgfsys@defobject{currentmarker}{\pgfqpoint{-0.029167in}{-0.029167in}}{\pgfqpoint{0.029167in}{0.029167in}}{%
\pgfpathmoveto{\pgfqpoint{-0.029167in}{0.000000in}}%
\pgfpathlineto{\pgfqpoint{0.029167in}{0.000000in}}%
\pgfpathmoveto{\pgfqpoint{0.000000in}{-0.029167in}}%
\pgfpathlineto{\pgfqpoint{0.000000in}{0.029167in}}%
\pgfusepath{stroke,fill}%
}%
\begin{pgfscope}%
\pgfsys@transformshift{2.971374in}{0.111104in}%
\pgfsys@useobject{currentmarker}{}%
\end{pgfscope}%
\end{pgfscope}%
\begin{pgfscope}%
\pgftext[x=3.171374in,y=0.072215in,left,base]{\fontsize{8.000000}{9.600000}\selectfont \(\displaystyle \ell=6\)}%
\end{pgfscope}%
\begin{pgfscope}%
\pgfsetrectcap%
\pgfsetroundjoin%
\pgfsetlinewidth{0.803000pt}%
\definecolor{currentstroke}{rgb}{0.145098,0.203922,0.580392}%
\pgfsetstrokecolor{currentstroke}%
\pgfsetdash{}{0pt}%
\pgfpathmoveto{\pgfqpoint{3.658260in}{0.111104in}}%
\pgfpathlineto{\pgfqpoint{3.880482in}{0.111104in}}%
\pgfusepath{stroke}%
\end{pgfscope}%
\begin{pgfscope}%
\pgfsetbuttcap%
\pgfsetroundjoin%
\definecolor{currentfill}{rgb}{0.145098,0.203922,0.580392}%
\pgfsetfillcolor{currentfill}%
\pgfsetlinewidth{1.003750pt}%
\definecolor{currentstroke}{rgb}{0.145098,0.203922,0.580392}%
\pgfsetstrokecolor{currentstroke}%
\pgfsetdash{}{0pt}%
\pgfsys@defobject{currentmarker}{\pgfqpoint{-0.029167in}{-0.029167in}}{\pgfqpoint{0.029167in}{0.029167in}}{%
\pgfpathmoveto{\pgfqpoint{-0.029167in}{0.000000in}}%
\pgfpathlineto{\pgfqpoint{0.029167in}{0.000000in}}%
\pgfpathmoveto{\pgfqpoint{0.000000in}{-0.029167in}}%
\pgfpathlineto{\pgfqpoint{0.000000in}{0.029167in}}%
\pgfusepath{stroke,fill}%
}%
\begin{pgfscope}%
\pgfsys@transformshift{3.769371in}{0.111104in}%
\pgfsys@useobject{currentmarker}{}%
\end{pgfscope}%
\end{pgfscope}%
\begin{pgfscope}%
\pgftext[x=3.969371in,y=0.072215in,left,base]{\fontsize{8.000000}{9.600000}\selectfont \(\displaystyle \ell=8\) (D-opt)}%
\end{pgfscope}%
\end{pgfpicture}%
\makeatother%
\endgroup
  \subfigure{\input{"figures/real_exp_mse.pgf"}}\qquad
  \subfigure{\input{"figures/real_exp_sparse.pgf"}}
  \caption{Predicting concrete compressive strength via the greedy method; higher $\ell$ increases the sparsity of the design matrix $X_S$, at the cost of marginally decreasing predictive performance.}
  \label{fig:concrete}
\end{figure}

\section{Conclusion and future work}
We introduced the family of \spopt problems, which evaluate the quality of an experimental design using elementary symmetric polynomials, and showed that typical approaches to optimal design such as continuous relaxation and greedy algorithms can be extended to this broad family of problems, which covers \aopt and \dopt as special cases.

We derived new properties of elementary symmetric polynomials: we showed that they are geodesically log-convex on the space of positive definite matrices, enabling fast solutions to solving the relaxed ESP optimization problem. We furthermore showed in Lemma~\ref{lemma:expectation} that volume sampling, applied to the columns of the design matrix $X$ has a constant multiplicative impact on the objective function $E_\ell(\invp{X_S^\top X_S})$, extending \citet{avron2013}'s result from the trace to all elementary symmetric polynomials. This allows us to derive a greedy algorithm with performance guarantees, which empirically performs as well as Fedorov exchange, in a fraction of the runtime.

However, our work still includes some open questions: in deriving the Lagrangian dual of the optimization problem, we had to introduce the function $a(H)$ which maps $\pd$; however, although $a(H)$ is known for $\ell=1,m$, its form for other values of $\ell$ is unknown, making the dual form a purely theoretical object in the general case. Whether the closed form of $a$ can be derived, or whether $E_\ell(a(H))$ can be obtained with only knowledge of $H$, remains an open problem. Due to the importance of the dual form of \dopt as the Minimum Volume Covering Ellipsoid, we believe that further investigation of the general dual form of \spopt will provide valuable insight, both into optimal design and for the general theory of optimization.

\bibliographystyle{abbrvnat}
\setlength{\bibsep}{2.8pt}
{\small \bibliography{bibliography}}

\clearpage
\appendix
\section{Geodesic convexity}
\label{app:gcvx}
Recall that we are using the notation
\begin{equation*}
  P\gm_t Q := P^{1/2}(P^{-1/2}QP^{-1/2})^tP^{1/2},\quad t \in [0,1],\ \text{and } P, Q \succ 0,
\end{equation*}
to denote the \emph{geodesic} between positive definite matrices $P$ and $Q$ under the Riemannian metric $g_P(X,Y)=\tr(P^{-1}XP^{-1}Y)$. The midpoint of this geodesic is $P\gm_{1/2}Q$, and it is customary to drop the subscript and just write $P\gm Q$. 

\subsection{Proof of Lemma~\ref{lem:ek-gcvx}}
Here we prove the log-g-convexity of $E_\ell$ on the set of psd matrices. As far as we are aware, this result is novel. By continuity, it suffices to prove midpoint log-g-convexity; that is, it suffices to prove
\begin{equation*}
  E_\ell(P\gm Q) \le \sqrt{E_\ell(P)E_\ell(Q)}.
\end{equation*}
From basic multilinear algebra (see e.g., \citep[Ch.~1]{bhatia97}) we know that for any $n\times n$ matrix $P$, there exists a projection matrix $W$ such that $E_\ell(P) = \tr \wedge^\ell P=\tr W^*P^{\otimes n}W$. \citep[Lemma~2.23]{sraHo15} shows that
\begin{equation*}
  (P\gm Q)^{\kron n} = P^{\kron n} \gm Q^{\kron n}.
\end{equation*}
Thus, it follows that
\begin{align*}
  E_\ell(P\gm Q) &= \tr W^*(P\gm Q)^{\kron n}W = \tr W^*[P^{\kron n} \gm Q^{\kron n}]W\\
  &\le [\tr W^*P^{\kron n}W]^{1/2}[\tr W^*Q^{\kron n}W]^{1/2}\\
  &= [E_\ell(P)E_\ell(Q)]^{1/2},
\end{align*}
where the inequality follows from log-g-convexity of the trace map~\citep[Cor.~2.9]{sraHo15}.\qed

Observe that this result is \emph{stronger} than the usual log-convexity result, which it yields as a corollary.

\subsection{Proof of Corollary~\ref{cor:ek}}
We present now a short new proof of the log-convexity of the map $Z \mapsto E_\ell(A^\top ZA)^{-1}$; we assume that $A$ has full column rank. As before, it suffices to prove midpoint convexity. Let $Z, Y \succ 0$. We must then show that
\begin{equation*}
  \log E_\ell \left(A^\top \left(\tfrac{Z+Y}{2}\right)A\right)^{-1} \le \half\log E_\ell(A^\top ZA)^{-1} + \half\log E_\ell(A^\top YA)^{-1}.
\end{equation*}
Since (e.g., \citep[Ch.~5]{bhatia07}) $(A^\top ZA)\gm(A^\top YA) \le \tfrac{A^\top ZA+A^\top YA}{2}$, we get $\bigl[A^\top \bigl(\tfrac{Z+Y}{2}\bigr)A\bigr]^{-1} \le [(A^\top ZA)\gm(A^\top YA)]^{-1}$. Since $\log E_\ell$ is monotonic in L\"owner order (Prop.~\ref{prop:basic}-(i)), we see that
\begin{equation*}
  \log E_\ell\left(A^\top \left(\tfrac{Z+Y}{2}\right)A\right)^{-1}
  \le
  \log E_\ell\left([(A^\top ZA)\gm(A^\top YA)]^{-1}\right) =
  \log E_\ell[(A^\top ZA)^{-1}\gm(A^\top YA)^{-1}].
\end{equation*}
But from Lemma~\ref{lem:ek-gcvx} we know that $E_\ell(P\gm Q) \le \sqrt{E_\ell(P)E_\ell(Q)}$, which allows us to write
\begin{equation*}
  \log E_\ell[(A^\top ZA)^{-1}\gm(A^\top YA)^{-1}]
  \le 
  \half E_\ell (A^\top ZA)^{-1} + \half E_\ell (A^\top YA)^{-1},
\end{equation*}
which completes the proof.\qed

\section{Bounding the support of the continuous relaxation}
\label{app:support}
As mentioned in the main paper, this proof is identical to the proof provided by~\citep[Lemma 3.5]{wang2016} for \aopt once we derive $\nabla f_\ell(z)$; we reproduce it here for completeness. 
\begin{proof}[Proof. (Theorem ~\ref{thm:support})]
  It is easy to show from \eqref{eq:deriv} and Prop.~\ref{prop:basic}-{\it (iii)} that 
\[\frac{\partial f_\ell(z)}{\partial z_i} = -\frac 1 \ell x_i^\top \underbrace{U \left(\inv\Lambda - \nabla e_{m-l}(\Lambda)/e_{m-\ell}(\Lambda)\right) U^\top}_{W}x_i\]
and that $W$ is positive definite. 

Assume now that all choices of $m(m+1)/2$ distinct rows of $X$ have their mapping under $\tilde \phi$ be independent. We now consider the Lagrangian multiplier version of~\eqref{eq:20}: 
\begin{equation*}
  f(z, u_i, v_i, \lambda) = f_\ell(z) -\nlsum_i u_i z_i + \sum_i (z_i - 1) + \lambda (\nlsum_i z_i - k)
\end{equation*}
Let $z^*$ be the optimal solution, and let $A \subseteq [n]$ be the indices $i$ such that $0 < z_i < 1$. Assume by contradiction that $|A| > m(m+1)/2$. By KKT conditions, we have for $i \in A$, 
\begin{equation}
  \label{eq:2}
  -\frac{\partial f(z^*)}{\partial z_i} = x_i^\top W x_i = \langle \phi(x) \mid \psi(W) \rangle = \mu 
\end{equation}
where $\phi$ is the mapping defined in Theorem~\ref{thm:support} and $\psi$ takes the upper triangle of a symmetric matrix and maps it to a vector of size $m (m+1)/2$. Then, \eqref{eq:2} can be rewritten for $m(m+1)/2$ indices in $A$ as the following linear system of variables:
\begin{equation}
  \label{eq:3}
  \begin{pmatrix}
    \tilde \phi(x_1) \\
    \ldots \\
    \tilde \phi(x_{m(m+1)/2+1})
  \end{pmatrix}
  \begin{pmatrix}
    \psi(W) \\
    -\lambda
  \end{pmatrix} = 0.
\end{equation}
By hypothesis, the first matrix is invertible and hence $\psi(W)$ and $\lambda$ must be 0, which contradicts the strict positive definiteness of $W$.
\end{proof}

\section{Greedy algorithm details}
\label{app:greedy}
To analyze our greedy algorithm, we need the following lemma, which is an extension of \citep[Lemma 3.9]{avron2013} to all elementary symmetric polynomials:
\begin{lemma}
  Let $X \in \reals^{n \times m} (n \ge m)$ be a matrix with full column rank, and let $k$ be a budget $m \le k \le n$. Let $S$ be a random variable with probability 
\[P_S = \frac{\det(X_S^\top X_S)}{\sum_{T \subseteq [n], |T| = k} \det(X_T^\top X_T)}.\]
Then 
\begin{equation}
  \label{eq:15}
  \mathbb E \left[E_\ell\left(\invp{X^\top _SX_S}\right)\right] \le \left(\prod_{i=1}^\ell \frac{n-m+i}{k-m+i}\right) E_\ell\left(\invp{X^\top X}\right).
\end{equation}
\end{lemma}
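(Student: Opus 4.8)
The plan is to reduce the ESP of an inverse to a nonnegative sum of principal minors, evaluate the resulting expectation \emph{exactly} by two nested applications of the Cauchy--Binet formula, and obtain the stated inequality solely from restricting the sum to the support of the volume-sampling distribution.

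First I would invoke Proposition~\ref{prop:basic}-(ii): for any invertible $X_S^\top X_S$ we have $E_\ell\invp{X_S^\top X_S} = E_{m-\ell}(X_S^\top X_S)/\det(X_S^\top X_S)$, so the factor $\det(X_S^\top X_S)$ in the sampling probability $P_S$ cancels. Writing $Z = \sum_{|T|=k}\det(X_T^\top X_T)$ for the normalizer, the expectation becomes $Z^{-1}\sum_{S} E_{m-\ell}(X_S^\top X_S)$, where the sum ranges over the support $\{S : \det(X_S^\top X_S)>0\}$. Since $X_S^\top X_S$ is positive semidefinite, every principal minor $\det((X_S^\top X_S)[W|W])$ is nonnegative, hence $E_{m-\ell}(X_S^\top X_S)\ge 0$ for every $S$ via the partial-volume representation~\eqref{eq:9}. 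This lets me enlarge the sum to all $S$ with $|S|=k$ at the cost of an inequality, which becomes an equality exactly when no term is dropped, i.e. when $X_S^\top X_S \succ 0$ for every $S$ of size $k$; this is precisely the stated equality condition.

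Next I would evaluate the two resulting sums in closed form. For the numerator I expand $E_{m-\ell}(X_S^\top X_S) = \sum_{|W|=m-\ell}\det(X[S|W]^\top X[S|W])$ over column index sets $W$ using~\eqref{eq:9}, then apply Cauchy--Binet to the $k\times(m-\ell)$ matrix $X[S|W]$ to write each minor as $\sum_{U\subseteq S,\,|U|=m-\ell}\det(X[U|W])^2$. Summing over $S$ and counting, for each fixed $U$, the number $\binom{n-m+\ell}{k-m+\ell}$ of size-$k$ sets $S\supseteq U$, a second Cauchy--Binet (now on the $n\times(m-\ell)$ matrix $X[\cdot|W]$) collapses the inner sum back to $\det((X^\top X)[W|W])$, giving $\sum_{|S|=k}E_{m-\ell}(X_S^\top X_S) = \binom{n-m+\ell}{k-m+\ell}\,E_{m-\ell}(X^\top X)$. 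An analogous single Cauchy--Binet computation yields the normalizer $Z = \binom{n-m}{k-m}\det(X^\top X)$.

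Finally I would assemble the pieces as $\mathbb E[\cdot] \le \binom{n-m+\ell}{k-m+\ell}\binom{n-m}{k-m}^{-1} E_{m-\ell}(X^\top X)/\det(X^\top X)$, convert $E_{m-\ell}(X^\top X) = \det(X^\top X)\,E_\ell\invp{X^\top X}$ back via Proposition~\ref{prop:basic}-(ii), and simplify the binomial ratio to $\prod_{i=1}^\ell (n-m+i)/(k-m+i)$, which is exactly the claimed bound. The main obstacle is the bookkeeping in the double Cauchy--Binet step: one applies the formula at two different ``levels'' (first peeling $k$ rows of $X_S$ down to $m-\ell$, then recognizing the reverse direction over all rows of $X$) and must track the combinatorial multiplicity $\binom{n-m+\ell}{k-m+\ell}$ correctly. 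Keeping the row index sets $S, U$ and the column index set $W$ straight, and justifying each exchange of summation order, is where care is needed; everything else follows directly from Proposition~\ref{prop:basic} and nonnegativity of the principal minors of a positive semidefinite matrix.
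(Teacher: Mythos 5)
Your proposal is correct and follows essentially the same route as the paper's proof: reduce via Proposition~\ref{prop:basic}-(ii) to $E_{m-\ell}(X_S^\top X_S)$, bound the sum over the support by the sum over all size-$k$ sets using nonnegativity of principal minors, evaluate numerator and denominator with Cauchy--Binet and the $\binom{n-m+\ell}{k-m+\ell}$ and $\binom{n-m}{k-m}$ counting arguments, and simplify the binomial ratio. The only cosmetic difference is notational (your column-restricted matrices $X[\cdot|W]$ play the role of the paper's $Y_L$).
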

\begin{proof}
  The below calculations depend heavily on the Cauchy-Binet formula, of which we reproduce a special case here for $X \in \reals^{n \times m}$:
  \begin{equation}
    \label{eq:16}
    \det(X^\top X) = \sum_{S \subseteq [n], |S| = m} \det(X_S^\top X_S).
  \end{equation}

  We also use the representation~(\ref{eq:9}). By definition we have
  \[\mathbb E \left[E_\ell\left(\invp{X^\top _SX_S}\right)\right] = \frac{\sum_{S\subseteq [n], |S|=k} \det(X_S^\top X_S)E_\ell\left(\invp{X^\top _SX_S}\right)}{\sum_{S\subseteq [n], |S|=k} \det(X_S^\top X_S)}\]
  For the denominator, we have
  \begin{align*}
    \sum_{S\subseteq [n], |S|=k} \det(X_S^\top X_S) &= \sum_{S\subseteq [n], |S|=k} \det(X_S^\top X_S) \\
                                                   &\overset{(a)}{=} \sum_{S\subseteq [n], |S|=k} \sum_{T \subseteq S, |T| = m} \det(X_T^\top X_T) \\
                                                   &\overset{(b)}{=} \binom{n-m}{k-m} \sum_{T \subseteq S, |T| = m} \det(X_T^\top X_T) \\
                                                   &\overset{(c)}{=} \binom{n-m}{k-m} \det(X^\top X)
  \end{align*}
  where $(a)$ is obtained using the Cauchy-Binet formula~\eqref{eq:16}, $(b)$ by noticing that there are $\binom{n-m}{k-m}$ sets of size $k$ that contain a set $T$ of size $m$, and $(c)$ by reapplying \eqref{eq:16}.

  For the numerator, we first use the fact that $E_\ell(\inv A) = \frac 1 {\det A} E_{m-\ell}(A)$:
  \begin{align*}
    \sum_{S\subseteq [n], |S|=k} \det(X_S^\top X_S)E_\ell\left(\invp{X^\top _SX_S}\right) &\overset{(a)}{\le} \sum_{S\subseteq [n], |S|=k} E_{m-\ell}(X_S^\top X_S) \\
                                                                                 &= \sum_{S\subseteq [n], |S|=k} \sum_{L \subseteq [m], |L| = m-\ell} (X_S^\top X_S)[L|L] \\
                                                                                 &\overset{(b)}{=} \sum_{S\subseteq [n], |S|=k} \sum_{L \subseteq [m], |L| = m-\ell} \det((Y_L)_S^\top (Y_L)_S) \\
                                                                                 &\overset{(c)}{=} \sum_{S\subseteq [n], |S|=k} \sum_{L \subseteq [m], |L| = m-\ell} \sum_{T \subseteq S, |T| = m-\ell} \det((Y_L)_T^\top (Y_L)_T) \\
                                                                                 &= \binom{n-m+\ell}{k-m+\ell}\sum_{L \subseteq [m], |L| = m-\ell} \sum_{T \in [n], |T| = m-\ell} \det((Y_L)_T^\top (Y_L)_T) \\
                                                                                 &\overset{(d)}{=} \binom{n-m+\ell}{k-m+\ell}\sum_{L \subseteq [m], |L| = m-\ell} \det((Y_L)^\top (Y_L)) \\
                                                                                 &= \binom{n-m+\ell}{k-m+\ell}\sum_{L \subseteq [m], |L| = m-\ell} (X^\top X)[L|L] \\
                                                                                 &= \binom{n-m+\ell}{k-m+\ell} E_{m - \ell}(X^\top X)
  \end{align*}
Here, $(a)$ is just \eqref{eq:9}; we have equality if all subsets $S$ of size $k$ produce strictly positive definite matrices $X_S^\top X_S$. For $(b)$, we note $Y_L$ the submatrix of $X$ with all columns but those in $L$ removed; then, $(Y_L)_S^\top (Y_L)_S = [X_S^\top X_S]$ for all subsets $S$. $(d)$ is an application of Cauchy-Binet. Hence, 
\begin{align*}
  \mathbb E \left[E_\ell\left(\invp{X^\top _SX_S}\right)\right] &= \frac{\sum_{S\subseteq [n], |S|=k} \det(X_S^\top X_S)E_\ell\left(\invp{X^\top _SX_S}\right)}{\sum_{S\subseteq [n], |S|=k} \det(X_S^\top X_S)} \\
                                                                &= \frac{\binom{n-m+\ell}{k-m+\ell} E_{m - \ell}(X^\top X)}{ \binom{k-m}{n-m} \det(X^\top X)} \\
                                                                &= \left(\prod_{i=1}^\ell \frac{n-m+i}{k-m+i}\right) E_\ell(\invp{X^\top X})
\end{align*}
\end{proof}
We can now prove Theorem~\ref{thm:bound}:
\begin{proof}
    We recursively show that greedily removing $j$ items constructs a set $S$ (of size $(n-j)$) s.t.
    \begin{equation}
      \label{eq:18}
      E_\ell\left(\invp{X^\top _SX_S}\right) \le \left(\prod_{i=1}^\ell \frac{n-m+i}{n-j-m+i}\right) E_\ell\left(\invp{X^\top X}\right).
    \end{equation}
    \eqref{eq:18} is trivially true for $j=0$. Assume now that \eqref{eq:18} holds for $j \ge 0$, and let $S_j$ be the corresponding set of size $(n-j)$. Let now $S_{j+1}$ be the set of size $|S_j|-1$ that minimizes $E_\ell(X^T_{S_{j+1}}X_{S_{j+1}})$. 

From lemma~\ref{lemma:expectation}, we know that for sets $S$ of size $|S_j|-1$ drawn according to dual volume sampling, 
\[\mathbb E \left[E_\ell\left(\invp{X^\top _SX_S}\right)\right] \le \left(\prod_{i=1}^\ell \frac{|S_j|-m+i}{(|S_j|-1)-m+i}\right) E_\ell\left(\invp{X^\top_{S_j} X_{S_j}}\right).\]
In particular, the minimum of $E_\ell(X^T_SX_S)$ over all sets of size $|S_j|-1$ is upper bounded by the expectancy:  $E_\ell(X^T_{S_{j+1}}X_{S_{j+1}}) \le \left(\prod_{i=1}^\ell \frac{n-j-m+i}{n-j-1-m+i}\right) E_\ell\left(\invp{X^\top_{S_j} X_{S_j}}\right)$.

By recursion hypothesis applied to $S_j$, we then have 
\begin{align*}
  E_\ell(X^T_{S_{j+1}}X_{S_{j+1}}) &\le \left(\prod_{i=1}^\ell \frac{n-j-m+i}{n-j-1-m+i}\right) E_\ell\left(\invp{X^\top_{S_j} X_{S_j}}\right) \\
                                   &\le \left(\prod_{i=1}^\ell \frac{n-j-m+i}{n-j-1-m+i}\right) \left(\prod_{i=1}^\ell \frac{n-m+i}{n-j-m+i}\right) E_\ell\left(\invp{X^\top X}\right) \\
                                   &\le \left(\prod_{i=1}^\ell \frac{n-m+i}{n-(j+1)-m+i}\right) E_\ell\left(\invp{X^\top X}\right),
\end{align*}
which concludes the recursion. Then, constructing a set of size $k$ amounts to setting $j=n-k$ in Eq.\eqref{eq:18}, which proves Eq.~\eqref{eq:14}.
\end{proof}

\section{Obtaining the dual formulation}
\label{app:dual}
We first show that \eqref{eq:dual} has $H \succ 0$: by contradiction, assume that there exists $x$ such that $x^\top H x < 0$ and $\|x\|=1$. Then setting $A = I - \frac{t}{1+t} xx^\top$ has $g(A)$ go to infinity with $t$. 

Next, $g(A) = -\frac 1 \ell \log E_\ell(A) - \tr(H \inv A)$ reaches its maximum on $\pd$: if $\|A\| \rightarrow \infty$, we easily have $g \rightarrow -\infty$. The same holds for $A \rightarrow \partial \pd$. 

We now derive the dual form:
\begin{align*}  
  \eqref{eq:dual} &\iff \inf_{\substack{\mu \in \reals, \\H \in \reals^{m \times m}}} \sup_{A \succ 0, z \ge 0} -\frac 1 \ell \log E_\ell(A) - \tr(H\inv A)  + \tr(H X^\top \Diag(z) X) - \mu (\bm{1}^\top z-k) \\
  \eqref{eq:dual} &\iff \inf_{\mu \in \reals, H \succeq 0} \left[f_\ell^\star (-H) + \sup_{z \ge 0}  \tr(H X^\top \Diag(z) X) - \mu (\bm{1}^\top z - k) \right]\\
  \eqref{eq:dual} &\iff \inf_{\mu \in \reals, H \succeq 0} \left[f_\ell^\star (-H) + \sup_{z \ge 0}  \sum_i z_i (x_i^\top H x_i-\mu) + \mu k \right]\\
  \eqref{eq:dual} &\iff \inf_{\substack{x_i^\top H x_i \le \mu,\\ H \succeq 0}} f_\ell^\star (-H) + k\mu \\
  \eqref{eq:dual} &\iff \sup_{\substack{x_i^\top H x_i \le 1,\\ H \succeq 0, \mu > 0}} - f_\ell^\star (-\mu H) - k\mu \\
  \eqref{eq:dual} &\overset{\star}{\iff} \sup_{\substack{x_i^\top H x_i \le 1,\\ H \succeq 0}} - f_\ell^\star (-H)
\end{align*}

Where $\overset{\star}{\iff}$ follows from 
$f_\ell^\star(-\mu H) = \sup_{A\succ 0} -\tr(H A) - f_\ell(A / \mu) = f_\ell^\star(-H) - \log \mu$.

Finally, we saw that by definition of $a(H)$, $f_\ell^\star(-H) = g(a(H)) = -E_\ell(a(H)) - \tr (H \invp{a(H)})$, and that the eigenvalues $\Lambda$ of $a(H)$ verify 
\begin{equation*}
    \lambda_i^2 \frac{e_{\ell-1}(\lambda_1, \ldots, \lambda_{i-1}, \lambda_{i+1}, \ldots, \lambda_m)}{e_\ell(\lambda_1, \ldots, \lambda_m)} = h_i, \quad 1 \le i \le m.
\end{equation*}
Then, 
\begin{align*}
  e_\ell(\lambda_1, \ldots, \lambda_m) \tr(H\invp{a(H)}) &= \sum_i \frac 1 {\lambda_i} \lambda_i^2 e_{\ell-1}(\lambda_1, \ldots, \lambda_{i-1}, \lambda_{i+1}, \ldots, \lambda_m) \\
                                                         &= \sum_i\lambda_i e_{\ell-1}(\lambda_1, \ldots, \lambda_{i-1}, \lambda_{i+1}, \ldots, \lambda_m)\\
                                                         &= \sum_i \sum_{J \subseteq [n], |J| = \ell, i \in J} \prod_{j \in J} \lambda_j
\end{align*}
Each subset $J$ is hence going to appear $\ell$ times, once for each of its elements; finally
\[ \tr(H\invp{a(H)}) = \ell \sum_{J \subseteq [n], |J| = \ell}  \prod_{j \in J} \lambda_j/  e_\ell(\lambda_1, \ldots, \lambda_m) = \ell\]
and hence 
\[\sup_{\substack{x_i^\top H x_i \le 1,\\ H \succeq 0}} -f_\ell^\star (-H) \iff \sup_{\substack{x_i^\top H x_i \le 1,\\ H \succeq 0}} -g(a(H)) = \sup_{\substack{x_i^\top H x_i \le 1,\\ H \succeq 0}} \frac 1 \ell \log E_\ell(a(H)) + \ell.\]
\section{Additional synthetic experimental results}
\label{app:synth-more}
To compare to~\citep{wang2016}, we generated the experimental matrix $X$ by sampling $n$ vectors of size $m$ from the multivariate Gaussian distribution of mean 0 and covariance $\Sigma = \text{Diag}(1^{-\alpha}, \ldots, m^{-\alpha})$ for various sizes of $\alpha$ and multiple budgets $k$, with $m=50, n=1000$; $\alpha$ controls hows \emph{skewed} the distribution is. Results averaged over 5 runs are reported in Table~\ref{tab:support-skewed} and Figure~\ref{fig:synth-skewed}; standard deviations are too small to appear in Figure~\ref{fig:synth-skewed}.

We also report more extensive results on synthetic experiments with a sparse precision matrix in Figure~\ref{fig:synth-precision-full}.

\begin{figure}[h]
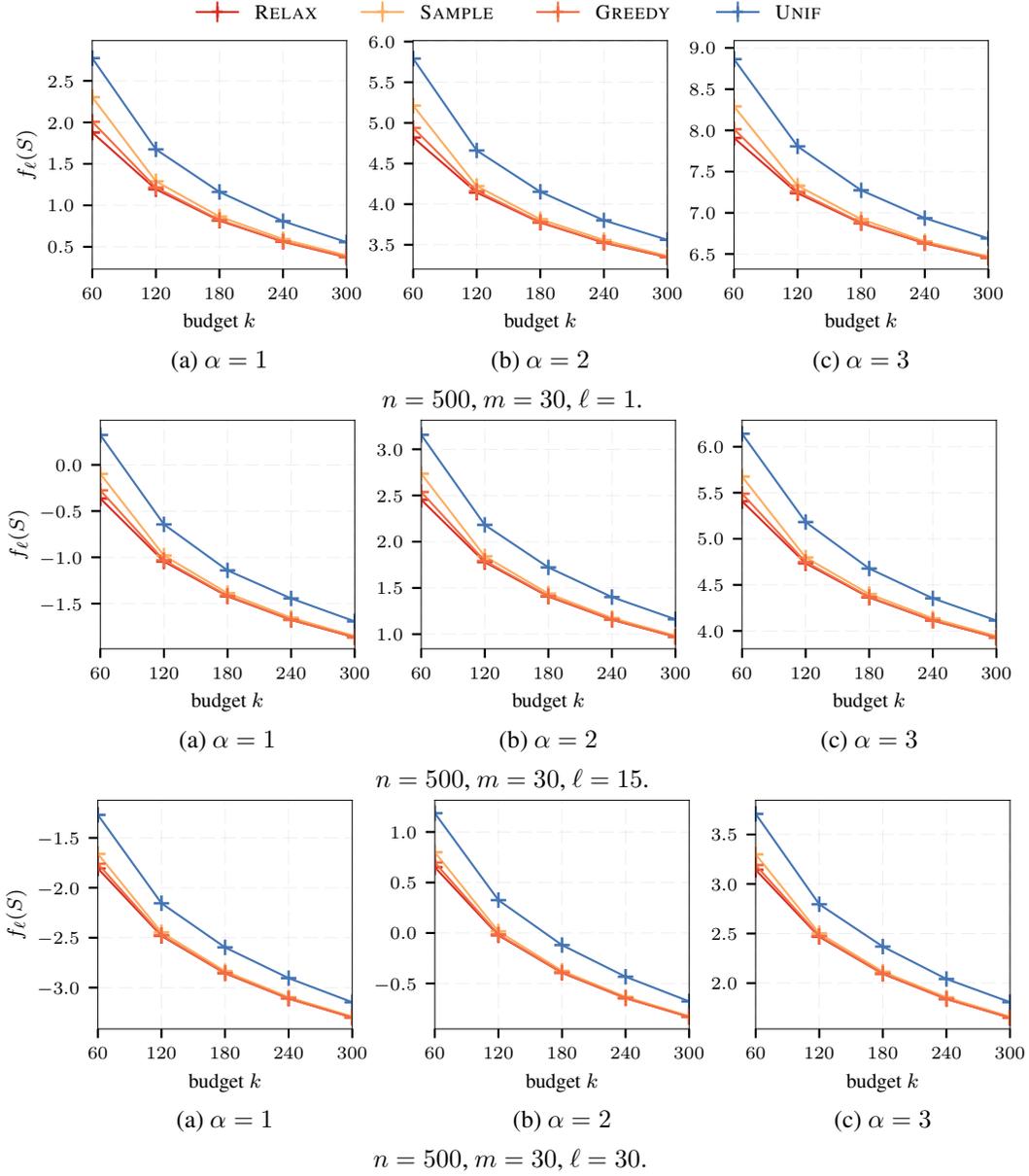

  \centering
\begingroup%
\makeatletter%
\begin{pgfpicture}%
\pgfpathrectangle{\pgfpointorigin}{\pgfqpoint{5.500000in}{0.243822in}}%
\pgfusepath{use as bounding box, clip}%
\begin{pgfscope}%
\pgfsetbuttcap%
\pgfsetmiterjoin%
\definecolor{currentfill}{rgb}{1.000000,1.000000,1.000000}%
\pgfsetfillcolor{currentfill}%
\pgfsetlinewidth{0.000000pt}%
\definecolor{currentstroke}{rgb}{1.000000,1.000000,1.000000}%
\pgfsetstrokecolor{currentstroke}%
\pgfsetdash{}{0pt}%
\pgfpathmoveto{\pgfqpoint{0.000000in}{0.000000in}}%
\pgfpathlineto{\pgfqpoint{5.500000in}{0.000000in}}%
\pgfpathlineto{\pgfqpoint{5.500000in}{0.243822in}}%
\pgfpathlineto{\pgfqpoint{0.000000in}{0.243822in}}%
\pgfpathclose%
\pgfusepath{fill}%
\end{pgfscope}%
\begin{pgfscope}%
\pgfsetbuttcap%
\pgfsetroundjoin%
\pgfsetlinewidth{0.803000pt}%
\definecolor{currentstroke}{rgb}{0.843137,0.188235,0.152941}%
\pgfsetstrokecolor{currentstroke}%
\pgfsetdash{}{0pt}%
\pgfpathmoveto{\pgfqpoint{1.173057in}{0.049378in}}%
\pgfpathlineto{\pgfqpoint{1.173057in}{0.160489in}}%
\pgfusepath{stroke}%
\end{pgfscope}%
\begin{pgfscope}%
\pgfsetrectcap%
\pgfsetroundjoin%
\pgfsetlinewidth{0.803000pt}%
\definecolor{currentstroke}{rgb}{0.843137,0.188235,0.152941}%
\pgfsetstrokecolor{currentstroke}%
\pgfsetdash{}{0pt}%
\pgfpathmoveto{\pgfqpoint{1.061946in}{0.104933in}}%
\pgfpathlineto{\pgfqpoint{1.284168in}{0.104933in}}%
\pgfusepath{stroke}%
\end{pgfscope}%
\begin{pgfscope}%
\pgfsetbuttcap%
\pgfsetroundjoin%
\definecolor{currentfill}{rgb}{0.843137,0.188235,0.152941}%
\pgfsetfillcolor{currentfill}%
\pgfsetlinewidth{1.003750pt}%
\definecolor{currentstroke}{rgb}{0.843137,0.188235,0.152941}%
\pgfsetstrokecolor{currentstroke}%
\pgfsetdash{}{0pt}%
\pgfsys@defobject{currentmarker}{\pgfqpoint{-0.029167in}{-0.029167in}}{\pgfqpoint{0.029167in}{0.029167in}}{%
\pgfpathmoveto{\pgfqpoint{-0.029167in}{0.000000in}}%
\pgfpathlineto{\pgfqpoint{0.029167in}{0.000000in}}%
\pgfpathmoveto{\pgfqpoint{0.000000in}{-0.029167in}}%
\pgfpathlineto{\pgfqpoint{0.000000in}{0.029167in}}%
\pgfusepath{stroke,fill}%
}%
\begin{pgfscope}%
\pgfsys@transformshift{1.173057in}{0.104933in}%
\pgfsys@useobject{currentmarker}{}%
\end{pgfscope}%
\end{pgfscope}%
\begin{pgfscope}%
\pgftext[x=1.373057in,y=0.066044in,left,base]{\fontsize{8.000000}{9.600000}\selectfont \textsc{Relax}}%
\end{pgfscope}%
\begin{pgfscope}%
\pgfsetbuttcap%
\pgfsetroundjoin%
\pgfsetlinewidth{0.803000pt}%
\definecolor{currentstroke}{rgb}{0.992157,0.682353,0.380392}%
\pgfsetstrokecolor{currentstroke}%
\pgfsetdash{}{0pt}%
\pgfpathmoveto{\pgfqpoint{2.062360in}{0.049378in}}%
\pgfpathlineto{\pgfqpoint{2.062360in}{0.160489in}}%
\pgfusepath{stroke}%
\end{pgfscope}%
\begin{pgfscope}%
\pgfsetrectcap%
\pgfsetroundjoin%
\pgfsetlinewidth{0.803000pt}%
\definecolor{currentstroke}{rgb}{0.992157,0.682353,0.380392}%
\pgfsetstrokecolor{currentstroke}%
\pgfsetdash{}{0pt}%
\pgfpathmoveto{\pgfqpoint{1.951249in}{0.104933in}}%
\pgfpathlineto{\pgfqpoint{2.173471in}{0.104933in}}%
\pgfusepath{stroke}%
\end{pgfscope}%
\begin{pgfscope}%
\pgfsetbuttcap%
\pgfsetroundjoin%
\definecolor{currentfill}{rgb}{0.992157,0.682353,0.380392}%
\pgfsetfillcolor{currentfill}%
\pgfsetlinewidth{1.003750pt}%
\definecolor{currentstroke}{rgb}{0.992157,0.682353,0.380392}%
\pgfsetstrokecolor{currentstroke}%
\pgfsetdash{}{0pt}%
\pgfsys@defobject{currentmarker}{\pgfqpoint{-0.029167in}{-0.029167in}}{\pgfqpoint{0.029167in}{0.029167in}}{%
\pgfpathmoveto{\pgfqpoint{-0.029167in}{0.000000in}}%
\pgfpathlineto{\pgfqpoint{0.029167in}{0.000000in}}%
\pgfpathmoveto{\pgfqpoint{0.000000in}{-0.029167in}}%
\pgfpathlineto{\pgfqpoint{0.000000in}{0.029167in}}%
\pgfusepath{stroke,fill}%
}%
\begin{pgfscope}%
\pgfsys@transformshift{2.062360in}{0.104933in}%
\pgfsys@useobject{currentmarker}{}%
\end{pgfscope}%
\end{pgfscope}%
\begin{pgfscope}%
\pgftext[x=2.262360in,y=0.066044in,left,base]{\fontsize{8.000000}{9.600000}\selectfont \textsc{Sample}}%
\end{pgfscope}%
\begin{pgfscope}%
\pgfsetbuttcap%
\pgfsetroundjoin%
\pgfsetlinewidth{0.803000pt}%
\definecolor{currentstroke}{rgb}{0.956863,0.427451,0.262745}%
\pgfsetstrokecolor{currentstroke}%
\pgfsetdash{}{0pt}%
\pgfpathmoveto{\pgfqpoint{3.007515in}{0.049378in}}%
\pgfpathlineto{\pgfqpoint{3.007515in}{0.160489in}}%
\pgfusepath{stroke}%
\end{pgfscope}%
\begin{pgfscope}%
\pgfsetrectcap%
\pgfsetroundjoin%
\pgfsetlinewidth{0.803000pt}%
\definecolor{currentstroke}{rgb}{0.956863,0.427451,0.262745}%
\pgfsetstrokecolor{currentstroke}%
\pgfsetdash{}{0pt}%
\pgfpathmoveto{\pgfqpoint{2.896403in}{0.104933in}}%
\pgfpathlineto{\pgfqpoint{3.118626in}{0.104933in}}%
\pgfusepath{stroke}%
\end{pgfscope}%
\begin{pgfscope}%
\pgfsetbuttcap%
\pgfsetroundjoin%
\definecolor{currentfill}{rgb}{0.956863,0.427451,0.262745}%
\pgfsetfillcolor{currentfill}%
\pgfsetlinewidth{1.003750pt}%
\definecolor{currentstroke}{rgb}{0.956863,0.427451,0.262745}%
\pgfsetstrokecolor{currentstroke}%
\pgfsetdash{}{0pt}%
\pgfsys@defobject{currentmarker}{\pgfqpoint{-0.029167in}{-0.029167in}}{\pgfqpoint{0.029167in}{0.029167in}}{%
\pgfpathmoveto{\pgfqpoint{-0.029167in}{0.000000in}}%
\pgfpathlineto{\pgfqpoint{0.029167in}{0.000000in}}%
\pgfpathmoveto{\pgfqpoint{0.000000in}{-0.029167in}}%
\pgfpathlineto{\pgfqpoint{0.000000in}{0.029167in}}%
\pgfusepath{stroke,fill}%
}%
\begin{pgfscope}%
\pgfsys@transformshift{3.007515in}{0.104933in}%
\pgfsys@useobject{currentmarker}{}%
\end{pgfscope}%
\end{pgfscope}%
\begin{pgfscope}%
\pgftext[x=3.207515in,y=0.066044in,left,base]{\fontsize{8.000000}{9.600000}\selectfont \textsc{Greedy}}%
\end{pgfscope}%
\begin{pgfscope}%
\pgfsetbuttcap%
\pgfsetroundjoin%
\pgfsetlinewidth{0.803000pt}%
\definecolor{currentstroke}{rgb}{0.270588,0.458824,0.705882}%
\pgfsetstrokecolor{currentstroke}%
\pgfsetdash{}{0pt}%
\pgfpathmoveto{\pgfqpoint{3.976312in}{0.049378in}}%
\pgfpathlineto{\pgfqpoint{3.976312in}{0.160489in}}%
\pgfusepath{stroke}%
\end{pgfscope}%
\begin{pgfscope}%
\pgfsetrectcap%
\pgfsetroundjoin%
\pgfsetlinewidth{0.803000pt}%
\definecolor{currentstroke}{rgb}{0.270588,0.458824,0.705882}%
\pgfsetstrokecolor{currentstroke}%
\pgfsetdash{}{0pt}%
\pgfpathmoveto{\pgfqpoint{3.865201in}{0.104933in}}%
\pgfpathlineto{\pgfqpoint{4.087424in}{0.104933in}}%
\pgfusepath{stroke}%
\end{pgfscope}%
\begin{pgfscope}%
\pgfsetbuttcap%
\pgfsetroundjoin%
\definecolor{currentfill}{rgb}{0.270588,0.458824,0.705882}%
\pgfsetfillcolor{currentfill}%
\pgfsetlinewidth{1.003750pt}%
\definecolor{currentstroke}{rgb}{0.270588,0.458824,0.705882}%
\pgfsetstrokecolor{currentstroke}%
\pgfsetdash{}{0pt}%
\pgfsys@defobject{currentmarker}{\pgfqpoint{-0.029167in}{-0.029167in}}{\pgfqpoint{0.029167in}{0.029167in}}{%
\pgfpathmoveto{\pgfqpoint{-0.029167in}{0.000000in}}%
\pgfpathlineto{\pgfqpoint{0.029167in}{0.000000in}}%
\pgfpathmoveto{\pgfqpoint{0.000000in}{-0.029167in}}%
\pgfpathlineto{\pgfqpoint{0.000000in}{0.029167in}}%
\pgfusepath{stroke,fill}%
}%
\begin{pgfscope}%
\pgfsys@transformshift{3.976312in}{0.104933in}%
\pgfsys@useobject{currentmarker}{}%
\end{pgfscope}%
\end{pgfscope}%
\begin{pgfscope}%
\pgftext[x=4.176312in,y=0.066044in,left,base]{\fontsize{8.000000}{9.600000}\selectfont \textsc{Unif}}%
\end{pgfscope}%
\end{pgfpicture}%
\makeatother%
\endgroup%
  \subfigure{
    \input{"figures/synth_skew_exp_alpha=1_l=1_n=500_m=30.pgf"}
    \input{"figures/synth_skew_exp_alpha=2_l=1_n=500_m=30.pgf"}
    \input{"figures/synth_skew_exp_alpha=3_l=1_n=500_m=30.pgf"}
  }
  $n=500$, $m=30$, $\ell=1$.
  \subfigure{
    \input{"figures/synth_skew_exp_alpha=1_l=15_n=500_m=30.pgf"}
    \input{"figures/synth_skew_exp_alpha=2_l=15_n=500_m=30.pgf"}
    \input{"figures/synth_skew_exp_alpha=3_l=15_n=500_m=30.pgf"}
  }
  $n=500$, $m=30$, $\ell=15$.
  \subfigure{
    \input{"figures/synth_skew_exp_alpha=1_l=30_n=500_m=30.pgf"}
    \input{"figures/synth_skew_exp_alpha=2_l=30_n=500_m=30.pgf"}
    \input{"figures/synth_skew_exp_alpha=3_l=30_n=500_m=30.pgf"}
  }
  $n=500$, $m=30$, $\ell=30$.
  \caption{Synthetic experiments with skewed covariance matrix}
  \label{fig:synth-skewed}
\end{figure}

\begin{table}[h]
  \caption{$\|z\|_0$  for $n=500$, $m=30$, $\ell=15$}
  \small
  \label{tab:support-skewed}
  \centering
  \begin{tabular}{cccccc}
    \toprule
    ~ & $k=60$ & $k=120$ & $k=180$ & $k=240$ & $k=300$ \\
    \midrule
    $\alpha$ = 1 & 167 $\pm$ 9 & 192 $\pm$ 6 & 241 $\pm$ 5 & 290 $\pm$ 4 & 335 $\pm$ 4 \\
    $\alpha$ = 2 & 160 $\pm$ 4 & 187 $\pm$ 5 & 240 $\pm$ 2 & 284 $\pm$ 3 & 331 $\pm$ 6 \\
    $\alpha$ = 3 & 160 $\pm$ 4 & 190 $\pm$ 3 & 237 $\pm$ 5 & 281 $\pm$ 4 & 333 $\pm$ 3 \\
    \bottomrule
  \end{tabular}
\end{table}

\begin{figure}[h]
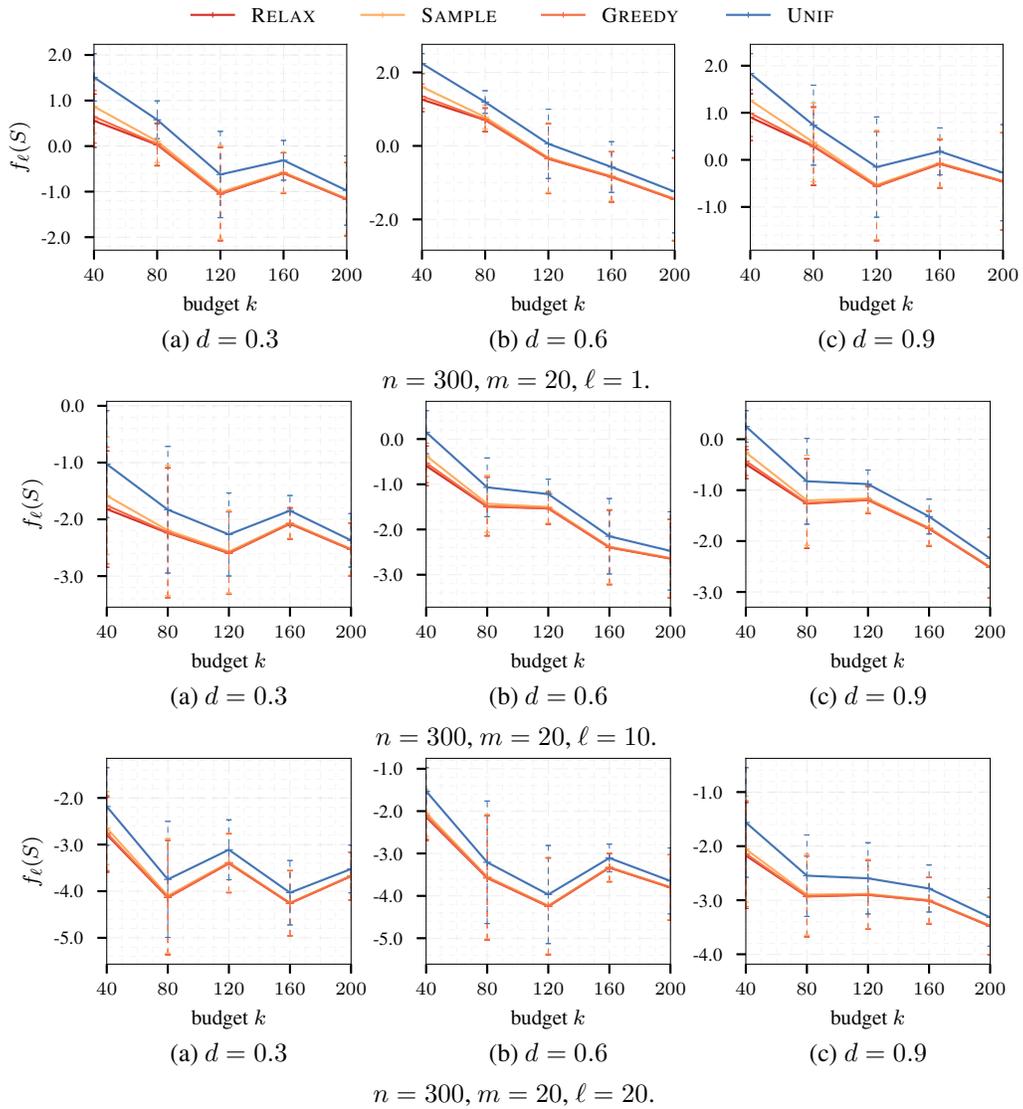

  \centering
\begingroup%
\makeatletter%
\begin{pgfpicture}%
\pgfpathrectangle{\pgfpointorigin}{\pgfqpoint{5.500000in}{0.243822in}}%
\pgfusepath{use as bounding box, clip}%
\begin{pgfscope}%
\pgfsetbuttcap%
\pgfsetmiterjoin%
\definecolor{currentfill}{rgb}{1.000000,1.000000,1.000000}%
\pgfsetfillcolor{currentfill}%
\pgfsetlinewidth{0.000000pt}%
\definecolor{currentstroke}{rgb}{1.000000,1.000000,1.000000}%
\pgfsetstrokecolor{currentstroke}%
\pgfsetdash{}{0pt}%
\pgfpathmoveto{\pgfqpoint{0.000000in}{0.000000in}}%
\pgfpathlineto{\pgfqpoint{5.500000in}{0.000000in}}%
\pgfpathlineto{\pgfqpoint{5.500000in}{0.243822in}}%
\pgfpathlineto{\pgfqpoint{0.000000in}{0.243822in}}%
\pgfpathclose%
\pgfusepath{fill}%
\end{pgfscope}%
\begin{pgfscope}%
\pgfsetrectcap%
\pgfsetroundjoin%
\pgfsetlinewidth{0.803000pt}%
\definecolor{currentstroke}{rgb}{0.843137,0.188235,0.152941}%
\pgfsetstrokecolor{currentstroke}%
\pgfsetdash{}{0pt}%
\pgfpathmoveto{\pgfqpoint{1.061946in}{0.104933in}}%
\pgfpathlineto{\pgfqpoint{1.284168in}{0.104933in}}%
\pgfusepath{stroke}%
\end{pgfscope}%
\begin{pgfscope}%
\pgfsetbuttcap%
\pgfsetroundjoin%
\definecolor{currentfill}{rgb}{0.843137,0.188235,0.152941}%
\pgfsetfillcolor{currentfill}%
\pgfsetlinewidth{0.401500pt}%
\definecolor{currentstroke}{rgb}{0.843137,0.188235,0.152941}%
\pgfsetstrokecolor{currentstroke}%
\pgfsetdash{}{0pt}%
\pgfsys@defobject{currentmarker}{\pgfqpoint{-0.014583in}{-0.014583in}}{\pgfqpoint{0.014583in}{0.014583in}}{%
\pgfpathmoveto{\pgfqpoint{-0.014583in}{0.000000in}}%
\pgfpathlineto{\pgfqpoint{0.014583in}{0.000000in}}%
\pgfpathmoveto{\pgfqpoint{0.000000in}{-0.014583in}}%
\pgfpathlineto{\pgfqpoint{0.000000in}{0.014583in}}%
\pgfusepath{stroke,fill}%
}%
\begin{pgfscope}%
\pgfsys@transformshift{1.173057in}{0.104933in}%
\pgfsys@useobject{currentmarker}{}%
\end{pgfscope}%
\end{pgfscope}%
\begin{pgfscope}%
\pgftext[x=1.373057in,y=0.066044in,left,base]{\fontsize{8.000000}{9.600000}\selectfont \textsc{Relax}}%
\end{pgfscope}%
\begin{pgfscope}%
\pgfsetrectcap%
\pgfsetroundjoin%
\pgfsetlinewidth{0.803000pt}%
\definecolor{currentstroke}{rgb}{0.992157,0.682353,0.380392}%
\pgfsetstrokecolor{currentstroke}%
\pgfsetdash{}{0pt}%
\pgfpathmoveto{\pgfqpoint{1.951249in}{0.104933in}}%
\pgfpathlineto{\pgfqpoint{2.173471in}{0.104933in}}%
\pgfusepath{stroke}%
\end{pgfscope}%
\begin{pgfscope}%
\pgfsetbuttcap%
\pgfsetroundjoin%
\definecolor{currentfill}{rgb}{0.992157,0.682353,0.380392}%
\pgfsetfillcolor{currentfill}%
\pgfsetlinewidth{0.401500pt}%
\definecolor{currentstroke}{rgb}{0.992157,0.682353,0.380392}%
\pgfsetstrokecolor{currentstroke}%
\pgfsetdash{}{0pt}%
\pgfsys@defobject{currentmarker}{\pgfqpoint{-0.014583in}{-0.014583in}}{\pgfqpoint{0.014583in}{0.014583in}}{%
\pgfpathmoveto{\pgfqpoint{-0.014583in}{0.000000in}}%
\pgfpathlineto{\pgfqpoint{0.014583in}{0.000000in}}%
\pgfpathmoveto{\pgfqpoint{0.000000in}{-0.014583in}}%
\pgfpathlineto{\pgfqpoint{0.000000in}{0.014583in}}%
\pgfusepath{stroke,fill}%
}%
\begin{pgfscope}%
\pgfsys@transformshift{2.062360in}{0.104933in}%
\pgfsys@useobject{currentmarker}{}%
\end{pgfscope}%
\end{pgfscope}%
\begin{pgfscope}%
\pgftext[x=2.262360in,y=0.066044in,left,base]{\fontsize{8.000000}{9.600000}\selectfont \textsc{Sample}}%
\end{pgfscope}%
\begin{pgfscope}%
\pgfsetrectcap%
\pgfsetroundjoin%
\pgfsetlinewidth{0.803000pt}%
\definecolor{currentstroke}{rgb}{0.956863,0.427451,0.262745}%
\pgfsetstrokecolor{currentstroke}%
\pgfsetdash{}{0pt}%
\pgfpathmoveto{\pgfqpoint{2.896403in}{0.104933in}}%
\pgfpathlineto{\pgfqpoint{3.118626in}{0.104933in}}%
\pgfusepath{stroke}%
\end{pgfscope}%
\begin{pgfscope}%
\pgfsetbuttcap%
\pgfsetroundjoin%
\definecolor{currentfill}{rgb}{0.956863,0.427451,0.262745}%
\pgfsetfillcolor{currentfill}%
\pgfsetlinewidth{0.401500pt}%
\definecolor{currentstroke}{rgb}{0.956863,0.427451,0.262745}%
\pgfsetstrokecolor{currentstroke}%
\pgfsetdash{}{0pt}%
\pgfsys@defobject{currentmarker}{\pgfqpoint{-0.014583in}{-0.014583in}}{\pgfqpoint{0.014583in}{0.014583in}}{%
\pgfpathmoveto{\pgfqpoint{-0.014583in}{0.000000in}}%
\pgfpathlineto{\pgfqpoint{0.014583in}{0.000000in}}%
\pgfpathmoveto{\pgfqpoint{0.000000in}{-0.014583in}}%
\pgfpathlineto{\pgfqpoint{0.000000in}{0.014583in}}%
\pgfusepath{stroke,fill}%
}%
\begin{pgfscope}%
\pgfsys@transformshift{3.007515in}{0.104933in}%
\pgfsys@useobject{currentmarker}{}%
\end{pgfscope}%
\end{pgfscope}%
\begin{pgfscope}%
\pgftext[x=3.207515in,y=0.066044in,left,base]{\fontsize{8.000000}{9.600000}\selectfont \textsc{Greedy}}%
\end{pgfscope}%
\begin{pgfscope}%
\pgfsetrectcap%
\pgfsetroundjoin%
\pgfsetlinewidth{0.803000pt}%
\definecolor{currentstroke}{rgb}{0.270588,0.458824,0.705882}%
\pgfsetstrokecolor{currentstroke}%
\pgfsetdash{}{0pt}%
\pgfpathmoveto{\pgfqpoint{3.865201in}{0.104933in}}%
\pgfpathlineto{\pgfqpoint{4.087424in}{0.104933in}}%
\pgfusepath{stroke}%
\end{pgfscope}%
\begin{pgfscope}%
\pgfsetbuttcap%
\pgfsetroundjoin%
\definecolor{currentfill}{rgb}{0.270588,0.458824,0.705882}%
\pgfsetfillcolor{currentfill}%
\pgfsetlinewidth{0.401500pt}%
\definecolor{currentstroke}{rgb}{0.270588,0.458824,0.705882}%
\pgfsetstrokecolor{currentstroke}%
\pgfsetdash{}{0pt}%
\pgfsys@defobject{currentmarker}{\pgfqpoint{-0.014583in}{-0.014583in}}{\pgfqpoint{0.014583in}{0.014583in}}{%
\pgfpathmoveto{\pgfqpoint{-0.014583in}{0.000000in}}%
\pgfpathlineto{\pgfqpoint{0.014583in}{0.000000in}}%
\pgfpathmoveto{\pgfqpoint{0.000000in}{-0.014583in}}%
\pgfpathlineto{\pgfqpoint{0.000000in}{0.014583in}}%
\pgfusepath{stroke,fill}%
}%
\begin{pgfscope}%
\pgfsys@transformshift{3.976312in}{0.104933in}%
\pgfsys@useobject{currentmarker}{}%
\end{pgfscope}%
\end{pgfscope}%
\begin{pgfscope}%
\pgftext[x=4.176312in,y=0.066044in,left,base]{\fontsize{8.000000}{9.600000}\selectfont \textsc{Unif}}%
\end{pgfscope}%
\end{pgfpicture}%
\makeatother%
\endgroup%
   \subfigure{
     \input{"figures/synth_precision_exp_density_full=0.3_l=1_n=300_m=20.pgf"}
     \input{"figures/synth_precision_exp_density_full=0.6_l=1_n=300_m=20.pgf"}
     \input{"figures/synth_precision_exp_density_full=0.9_l=1_n=300_m=20.pgf"}
   }
   $n=300$, $m=20$, $\ell=1$.
   \subfigure{
     \input{"figures/synth_precision_exp_density_full=0.3_l=10_n=300_m=20.pgf"}
     \input{"figures/synth_precision_exp_density_full=0.6_l=10_n=300_m=20.pgf"}
     \input{"figures/synth_precision_exp_density_full=0.9_l=10_n=300_m=20.pgf"}
   }
   $n=300$, $m=20$, $\ell=10$.
   \subfigure{
     \input{"figures/synth_precision_exp_density_full=0.3_l=20_n=300_m=20.pgf"}
     \input{"figures/synth_precision_exp_density_full=0.6_l=20_n=300_m=20.pgf"}
     \input{"figures/synth_precision_exp_density_full=0.9_l=20_n=300_m=20.pgf"}
   }
   $n=300$, $m=20$, $\ell=20$.
   \caption{Synthetic experiments with sparse precision matrix.}
   \label{fig:synth-precision-full}
\end{figure}

\end{document}